\theoremstyle{plain}
\newtheorem{theorem}                {Theorem}      [section]
\newtheorem*{theorem*}                {Theorem \ref{thm:appl}}
\newtheorem{proposition}  [theorem]  {Proposition}
\newtheorem{lemma}        [theorem]  {Lemma}
\theoremstyle{definition}
\newtheorem{remark}       [theorem]  {Remark}
\DeclareMathOperator{\trace}{trace} 
\DeclareMathOperator{\Div}{div}
\DeclareMathOperator{\grad}{grad}
\DeclareMathOperator{\Int}{Int}
\numberwithin{equation}{section}
\begin{document}

\title[Uniqueness of complete biconservative surfaces in $N^3(\epsilon)$]
{On the uniqueness of complete biconservative surfaces in $3$-dimensional space forms}

\author{Simona~Nistor, Cezar~Oniciuc}

\address{Faculty of Mathematics\\ Al. I. Cuza University of Iasi\\
Blvd. Carol I, 11 \\ 700506 Iasi, Romania} \email{nistor.simona@ymail.com}

\address{Faculty of Mathematics\\ Al. I. Cuza University of Iasi\\
Blvd. Carol I, 11 \\ 700506 Iasi, Romania} \email{oniciucc@uaic.ro}

\thanks{This work was supported by a grant of the Romanian Ministry of Research and Innovation, CCCDI-UEFISCDI, project number PN-III-P3-3.1-PM-RO-FR-2019-0234 / 1BM / 2019, whithin PNCDI III}

\subjclass[2010]{Primary 53A10, 57N05; Secondary 53C40, 53C42}

\keywords{Biconservative surfaces, complete surfaces, real space forms, mean curvature function}

\begin{abstract}
Biconservative surfaces are surfaces with divergence-free stress-bienergy tensor. Simply connected, complete, non-$CMC$ biconservative surfaces in $3$-dimensional space forms were constructed working in extrinsic and intrinsic ways. Then, one raises the question of the uniqueness of such surfaces. In this paper we give a positive answer to this question.
\end{abstract}

\maketitle
\section{Introduction}

Biharmonic submanifolds can be viewed as a generalization of minimal submanifolds, and in the last two decades they have been studied intensively. The characterizing equation of biharmonic submanifolds, i.e., the biharmonic equation, can be decomposed in its tangent and normal parts.

The biharmonic submanifolds are rather rigid, while the biconservative ones are obtained by relaxing the biharmonicity condition. More precisely, the biconservative submanifolds are defined by $\Div S_2=0$, where $S_2$ is the stress-bienergy tensor, and are characterized by the tangent part of the biharmonic equation. Thus, we have more examples of biconservative submanifolds, also when biharmonic submanifolds do not exist (see \cite{FOP15,FT16,J87,LMO08,MOR16,S15,T15,YT18}).

Despite their characterization being simpler than in the biharmonic case, the biconservative submanifolds have interesting geometric properties. A most remarkable one is the fact that the Hopf differential associated to a biconservative surface in an arbitrary Riemannian manifold is holomorphic if and only if the surface is $CMC$, i.e. it has constant mean curvature.

Biconservative hypersurfaces in space forms $N^{m+1}(\epsilon)$ of constant sectional curvature $\epsilon$ are characterized by
$$
A(\grad f)=-\frac{f}{2}\grad f,
$$
where $A$ is the shape operator and $f=\trace A$ is the mean curvature function.

Obviously, any $CMC$ hypersurface in a space form is biconservative, and therefore, when working in space forms, one is interested in the study of non-$CMC$ biconservative hypersurfaces, i.e. $\grad f\neq 0$ at any point of an open subset.

The explicit parametric equations of biconservative surfaces in $N^3(\epsilon)$ with $\grad f\neq 0$ at any point have been found in \cite{CMOP14, F15, HV95} and then, using these equations, simply connected, complete surfaces with $\grad f\neq 0$ on a dense subset have been constructed. These constructions were achieved in both extrinsic and intrinsic approaches. The intrinsic way is to first construct the abstract domain of the immersion, while the extrinsic one is to work with their explicit parametric equations (see \cite{NPhD17, N16, NO19-H}). The following issue appears naturally:

\textbf{Open question.} \textit{The only simply connected, complete, non-$CMC$ biconservative surfaces in $N^3(\epsilon)$ are the ones given above, up to isometries of the domain and the codomain.}

The answer to the open question is positive when considering complete, regular, biconservative surfaces in $3$-dimensional Euclidian space $\mathbb{R}^3$ (see \cite{NO19}).

In this paper, we show that if $\epsilon\in\{-1,0,1\}$, then the answer is also positive, even if we drop the regularity condition, that is the immersions do not have to be homeomorphisms on their images.

Our paper is organized as follows. In Section \ref{sect:GenProp} we review some known results on the properties of biconservative surfaces in $N^3(\epsilon)$ that will be further used.

In Section \ref{sec-complete} we follow the same idea as in \cite{NO19-H}, where the ambient space is the $3$-dimensional hyperbolic space $\mathbb{H}^3$, to construct, in an intrinsic way, simply connected, complete, non-$CMC$ biconservative surfaces in the $3$-dimensional unit Euclidean sphere $\Phi_{1,C}:\left(\mathbb{R}^2,\tilde{g}_{1,C}\right) \to \mathbb{S}^3$, where $C>4/\sqrt{3}$ is a real parameter. More precisely, writing the metric of the abstract standard biconservative surfaces in a convenient way, we first construct the abstract domains of the biconservative immersions, i.e., $\left(\mathbb{R}^2,\tilde{g}_{1,C}\right)$, and the shape operator. Then we use the Fundamental Theorem of Surfaces in $\mathbb{S}^3$ in order to prove the existence and the uniqueness of $\Phi_{1,C}$ (see Theorem \ref{th:EUS3}). The difficult part is that, in order to obtain the abstract domains of the immersions we are looking for, one has to glue an infinite number of abstract standard biconservative surfaces. As the above method also works when $\epsilon=0$, we get a unitary procedure to construct simply connected, complete, non-$CMC$ biconservative surfaces in any $N^3(\epsilon)$.

In the last section we prove that the simply connected, complete, non-$CMC$ biconservative surfaces in $N^3(\epsilon)$ obtained before are the only ones with these properties. The idea of the proof is to show that if $\Phi:M^2\to N^3(\epsilon)$ is a simply connected, complete, non-$CMC$ biconservative surface then $M$ must be isometric to $\left(\mathbb{R}^2,\tilde{g}_{\epsilon,C}\right)$, for some constant $C$, and then to use a theorem of the same type as Theorem \ref{th:EUS3} in order to conclude with $\Phi=\Phi_{\epsilon,C}$.

\textbf{Conventions.} We assume that all manifolds are connected and oriented, and use the following sign conventions for the rough Laplacian acting on sections of $\varphi^{-1}(TN)$ and for the curvature tensor field of $N$, respectively:
$$
\Delta^{\varphi}=-\trace_{g} \left(\nabla^{\varphi}\nabla^{\varphi}-\nabla^{\varphi}_{\nabla}\right)
$$
and
$$
R^N(X,Y)Z=[\nabla^N_X,\nabla^N_Y]Z-\nabla^N_{[X,Y]}Z.
$$

\section{General properties of biconservative surfaces in $N^3(\epsilon)$ }\label{sect:GenProp}

In this section we recall some properties of biconservative surfaces in space forms with nowhere vanishing $\grad f$.

\begin{theorem}[\cite{CMOP14}]\label{thm:CMOP}
Let $\varphi:M^2\to N^3(\epsilon)$ be a biconservative surface with $\grad f\neq 0$ at any point of $M$. Then the Gaussian curvature $K$ satisfies
\begin{itemize}
\item [(i)]
    $$
    K=\det A+\epsilon=-\frac{3f^2}{4}+\epsilon;
    $$
\item [(ii)] $\epsilon-K>0$, $\grad K\neq 0$ at any point of $M$, and the level curves of $K$ are circles in $M$ with constant curvature
\begin{equation*}
\kappa=\frac{3|\grad K|}{8(\epsilon-K)};
\end{equation*}
\item [(iii)]
$$
(\epsilon-K)\Delta K-|\grad K|^2-\frac{8}{3}K(\epsilon-K)^2=0,
$$
where $\Delta$ is the Laplace-Beltrami operator on $M$.
\end{itemize}
\end{theorem}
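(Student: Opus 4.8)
The plan is to read the principal curvatures off the biconservative condition, extract the structure of an adapted orthonormal frame from the Codazzi equation, and then turn the Gauss equation into the three stated relations. The biconservative equation $A(\grad f)=-\tfrac{f}{2}\grad f$ says precisely that, wherever $\grad f\neq 0$, the nowhere-vanishing field $\grad f$ is a principal direction with principal curvature $-f/2$; since $\trace A=f$, the other principal curvature is $3f/2$, whence $\det A=-3f^2/4$. The Gauss equation for a surface in $N^3(\epsilon)$, namely $K=\det A+\epsilon$, then gives $K=-\tfrac{3f^2}{4}+\epsilon$, which is (i).

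Next I would introduce the adapted orthonormal frame $X_1=\grad f/|\grad f|$ and $X_2\perp X_1$, and write the Levi-Civita connection as $\nabla_{X_1}X_1=aX_2$ and $\nabla_{X_2}X_1=bX_2$ (so that $\nabla_{X_1}X_2=-aX_1$ and $\nabla_{X_2}X_2=-bX_1$) for smooth functions $a,b$; these are globally defined precisely because $\grad f$ never vanishes. Inserting $AX_1=-\tfrac f2 X_1$ and $AX_2=\tfrac{3f}{2}X_2$ into the Codazzi equation $(\nabla_{X_1}A)X_2=(\nabla_{X_2}A)X_1$ (the ambient-curvature term drops out in a space form) and using $X_2 f=\langle\grad f,X_2\rangle=0$, I expect the two components to yield $af=0$ and $3(X_1 f)+4bf=0$.

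The delicate step --- and the one I regard as the main obstacle --- is to deduce that $f$ never vanishes, since all of (ii) hinges on this. Here the second Codazzi relation does the work: at a hypothetical zero of $f$ it forces $X_1 f=0$, i.e. $|\grad f|=0$, contradicting the hypothesis; hence $f$, and with it $K$, is nowhere zero on the connected surface, $a\equiv 0$, $b=-3(X_1 f)/(4f)$, and therefore $\epsilon-K=3f^2/4>0$ while $\grad K=-\tfrac{3f}{2}\grad f\neq 0$. The level curves of $K$ coincide with those of $f$, that is, with the integral curves of $X_2$; their geodesic curvature is $|\nabla_{X_2}X_2|=|b|=3|\grad f|/(4|f|)$, which rewrites as $3|\grad K|/(8(\epsilon-K))$ once one substitutes $|\grad K|=\tfrac{3|f|}{2}|\grad f|$ and $\epsilon-K=3f^2/4$. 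To see these curves are circles I would verify that this curvature is constant along each of them: from $[X_1,X_2]=-aX_1-bX_2$ with $a\equiv0$ one gets $X_2(X_1 f)=X_1(X_2 f)=0$, so $f$, $|\grad f|=X_1 f$, and hence $b$ and $\kappa$ are all constant on the level curves.

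Finally, for (iii) I would play two computations of $K$ against each other. The frame formula for the Gaussian curvature reads $K=X_2 a-X_1 b-a^2-b^2=-X_1 b-b^2$, and substituting $b=-3(X_1 f)/(4f)$ expresses $K$ through $f$, $X_1 f$ and the second derivative $X_1 X_1 f$. On the other hand, writing $\grad K=(X_1 K)X_1$ with $X_1 K=-\tfrac{3f}{2}X_1 f$ and $\Div X_1=b$, I would compute $\Delta K=-\Div\grad K$ according to the sign convention fixed in the Introduction. Eliminating $X_1 X_1 f$ between these two computations and then using $\epsilon-K=3f^2/4$ and $|\grad K|^2=\tfrac{9}{4}f^2(X_1 f)^2$ should collapse everything to $(\epsilon-K)\Delta K-|\grad K|^2-\tfrac 83 K(\epsilon-K)^2=0$, which is (iii). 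This last elimination is the bulk of the remaining work: routine, but sign-sensitive.
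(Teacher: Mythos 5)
Your proposal is correct: reading the principal curvatures $-f/2$ and $3f/2$ off the biconservative equation, deriving $af=0$ and $3X_1f+4bf=0$ from Codazzi (which in particular forces $f\neq 0$ everywhere, the key point), and eliminating $X_1X_1f$ between the frame formula $K=-X_1b-b^2$ and $K=\epsilon-\tfrac{3}{4}f^2$ does yield (iii), provided one uses $\Delta=-\Div\grad$ as fixed in the Introduction --- I checked that this elimination closes up exactly as you predict. The paper itself offers no proof, since Theorem \ref{thm:CMOP} is recalled from \cite{CMOP14}, and your argument is essentially the standard one given in that reference.
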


In particular, it follows that, choosing $H/|H|$ as the unit normal vector field, we have $f>0$.

\begin{theorem}[\cite{CMOP14}]
Let $M^2$ be a biconservative surface in $N^3(\epsilon)$ with nowhere vanishing gradient of the mean curvature function $f$. Then
\begin{equation}\label{eq:bicons1}
f\Delta f+|\grad f|^2+\frac{4}{3}\epsilon f^2-f^4=0,
\end{equation}
where $\Delta$ is the Laplace-Beltrami operator on $M$.
\end{theorem}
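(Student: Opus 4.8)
The plan is to derive \eqref{eq:bicons1} directly from Theorem~\ref{thm:CMOP} by substitution, since parts (i) and (iii) already contain all the geometric content and reduce the problem to an algebraic manipulation. From (i) we have $K=\epsilon-\tfrac34 f^2$, equivalently $\epsilon-K=\tfrac34 f^2$, and this is the relation I would use to convert the second-order PDE (iii) for $K$ into the stated PDE for $f$. Note at the outset that $f$ is nowhere zero: by the remark following Theorem~\ref{thm:CMOP} we may take $f>0$, so dividing by powers of $f$ at the end will be legitimate.

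Next I would express the three ingredients of (iii) in terms of $f$. Differentiating $K=\epsilon-\tfrac34 f^2$ gives $\grad K=-\tfrac32 f\grad f$, hence $|\grad K|^2=\tfrac94 f^2|\grad f|^2$. For the Laplacian I would invoke the product rule $\Delta(f^2)=2f\Delta f-2|\grad f|^2$, valid for the sign convention fixed in the Conventions (so that $\Delta=-\Div\grad$ on functions), which yields
$$
\Delta K=-\tfrac34\Delta(f^2)=-\tfrac32 f\Delta f+\tfrac32|\grad f|^2.
$$
Together with $\epsilon-K=\tfrac34 f^2$ and $(\epsilon-K)^2=\tfrac{9}{16}f^4$, every term of (iii) becomes a polynomial in $f$, $\Delta f$, and $|\grad f|^2$.

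Substituting into $(\epsilon-K)\Delta K-|\grad K|^2-\tfrac83 K(\epsilon-K)^2=0$ and expanding, the three contributions are $-\tfrac98 f^3\Delta f+\tfrac98 f^2|\grad f|^2$ from the first term, $-\tfrac94 f^2|\grad f|^2$ from the second, and $-\tfrac32\epsilon f^4+\tfrac98 f^6$ from the third (using $K=\epsilon-\tfrac34 f^2$ in the last term). The two $|\grad f|^2$ contributions combine to $-\tfrac98 f^2|\grad f|^2$, so the whole expression collapses to
$$
-\tfrac98\Bigl(f^3\Delta f+f^2|\grad f|^2+\tfrac43\epsilon f^4-f^6\Bigr)=0.
$$
Dividing by $-\tfrac98 f^2$, which is permitted since $f>0$, gives exactly \eqref{eq:bicons1}.

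The computation is routine once the substitution is in place; the only genuine point of care is getting the Laplacian sign convention and the product rule correct, since a slip there would flip the signs of the $f\Delta f$ and $\epsilon f^2$ terms. I would therefore cross-check the convention $\Delta=-\Div\grad$ against (iii) to confirm that the factor $\tfrac43$ and the final $-f^4$ emerge with the stated signs, and verify that the division by $f^2$ is justified by the positivity of $f$.
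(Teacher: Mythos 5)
The paper itself gives no proof of this statement: it is quoted from \cite{CMOP14}, just as Theorem \ref{thm:CMOP} is, so there is no in-paper argument to compare against line by line. Your derivation is a correct and complete reduction of \eqref{eq:bicons1} to Theorem \ref{thm:CMOP}. The algebra checks: with $\epsilon-K=\tfrac34 f^2$ one gets $|\grad K|^2=\tfrac94 f^2|\grad f|^2$ and, under the convention $\Delta=-\Div\grad$ fixed in the paper's Conventions, $\Delta K=-\tfrac32 f\Delta f+\tfrac32|\grad f|^2$; substituting into Theorem \ref{thm:CMOP} (iii) gives
$$
-\tfrac98 f^2\left(f\Delta f+|\grad f|^2+\tfrac43\epsilon f^2-f^4\right)=0,
$$
and the division by $f^2$ is legitimate since parts (i)--(ii) give $f^2=\tfrac43(\epsilon-K)>0$ (your appeal to the remark that $f>0$ works equally well). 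Your closing caution about the sign convention is not idle: under the opposite (analyst's) convention the same substitution would produce $f\Delta f+3|\grad f|^2+\tfrac43\epsilon f^2-f^4=0$, so the two quoted equations are mutually consistent only with the geometer's Laplacian, exactly as the paper declares. The difference from the original route is that \cite{CMOP14} derives both Theorem \ref{thm:CMOP} and \eqref{eq:bicons1} directly from the biconservative condition $A(\grad f)=-\tfrac{f}{2}\grad f$, working in a frame adapted to $\grad f$ with the Gauss and Codazzi equations; your argument instead exhibits \eqref{eq:bicons1} as a purely algebraic consequence of (i) and (iii) --- indeed, given (i) and (ii), an equivalent restatement of (iii). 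Within the logical structure of this paper, where Theorem \ref{thm:CMOP} is taken as known, your proof is sound and is the more economical one; what it does not provide is an independent verification, since its entire geometric content is borrowed from the cited theorem.
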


We recall now a classical result concerning the existence of $CMC$ surfaces in $N^3(\epsilon)$.

\begin{theorem}[\cite{L70}]\label{thm:cmc-holom}
Let $\varphi:\left(M^2,g\right)\to N^3(\epsilon)$ be a $CMC$ surface. Then $|H|^2+\epsilon-K\geq 0$ at any point, and either $|H|^2+\epsilon-K=0$ everywhere, i.e., $M$ is umbilical, or $|H|^2+\epsilon-K=0$ only at isolated points. Moreover, on the set where $|H|^2+\epsilon-K>0$, we have
\begin{equation*}
\Delta \log\left(|H|^2+\epsilon-K\right)+4K=0,
\end{equation*}
or, equivalently,
\begin{equation}\label{h-holomorphic-equiv}
\left(|H|^2+\epsilon-K\right)\Delta K-|\grad K|^2-4K\left(|H|^2+\epsilon-K\right)^2=0.
\end{equation}
\end{theorem}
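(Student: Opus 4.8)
The plan is to recover this as the classical Hopf-differential argument for constant mean curvature surfaces. First I would isolate the pointwise identity behind the inequality. Denoting by $\lambda_1,\lambda_2$ the principal curvatures, with $|H|=\tfrac12|\lambda_1+\lambda_2|$, the Gauss equation reads $K=\lambda_1\lambda_2+\epsilon$, and hence
$$
|H|^2+\epsilon-K=\frac{(\lambda_1+\lambda_2)^2}{4}-\lambda_1\lambda_2=\frac{(\lambda_1-\lambda_2)^2}{4}\geq 0,
$$
with equality exactly at the umbilical points. This already proves the first assertion and identifies the zero set of $|H|^2+\epsilon-K$ with the umbilical locus.

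Next I would pass to a local isothermal chart $z=x+iy$ in which $g=\lambda^2(dx^2+dy^2)$, and introduce the Hopf differential $\Phi=\phi\,dz^2$ with $\phi=\langle \mathrm{II}(\partial_z,\partial_z),\nu\rangle$. A direct computation of the coefficients of the second fundamental form shows that $|H|^2+\epsilon-K$ equals a positive constant multiple of $|\phi|^2\lambda^{-4}$, so that this quantity and $|\phi|^2$ share the same zero set. The decisive input is that in a space form the Codazzi equation carries no curvature term, so $\nabla A$ is totally symmetric; combined with the constancy of $|H|$ this forces $\partial_{\bar z}\phi=0$, i.e.\ $\phi$ is holomorphic. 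Consequently either $\phi\equiv 0$, in which case the surface is totally umbilical and $|H|^2+\epsilon-K\equiv 0$, or $\phi$ has only isolated zeros, which gives the stated dichotomy.

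On the open set $\{|H|^2+\epsilon-K>0\}=\{\phi\neq 0\}$ the holomorphic function $\phi$ does not vanish, so $\log|\phi|$ is harmonic for the flat Laplacian, that is $\Delta_0\log|\phi|^2=0$. Writing $\log(|H|^2+\epsilon-K)=\log|\phi|^2-4\log\lambda+\mathrm{const}$ and using $K=\Delta\log\lambda$ together with $\Delta=-\lambda^{-2}\Delta_0$ then yields $\Delta\log(|H|^2+\epsilon-K)=-4K$, which is the first displayed equation. Finally, since $|H|$ is constant we have $\grad(|H|^2+\epsilon-K)=-\grad K$; expanding $\Delta\log W=W^{-1}\Delta W+W^{-2}|\grad W|^2$ with $W=|H|^2+\epsilon-K$ and clearing denominators converts the logarithmic form into the polynomial identity \eqref{h-holomorphic-equiv}.

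The step I expect to require the most care is the holomorphicity of $\phi$: it is exactly here that both hypotheses — constant mean curvature and constant ambient sectional curvature — enter, through the Codazzi equation and the absence of its curvature term. Beyond that, the only other delicate point is bookkeeping the constants and signs in the transitions between the flat Laplacian $\Delta_0$ and the intrinsic Laplacian $\Delta$ in the last two steps.
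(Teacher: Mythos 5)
Your proof is correct, and all the sign bookkeeping is consistent with the paper's convention $\Delta=-\trace_g\nabla d$ (so that $K=-\lambda^{-2}\Delta_0\log\lambda$ and $\Delta\log W=W^{-1}\Delta W+W^{-2}|\grad W|^2$, which indeed turns the logarithmic identity into \eqref{h-holomorphic-equiv}). Note that the paper itself gives no proof of this statement: it is quoted as a classical result of Lawson \cite{L70} (with the minimal case credited to \cite{R95}), and your Hopf-differential argument --- umbilical defect $|H|^2+\epsilon-K=4|\phi|^2\lambda^{-4}$, holomorphicity of $\phi$ from Codazzi plus CMC, hence identically zero or isolated zeros, then harmonicity of $\log|\phi|$ --- is precisely the standard proof underlying that reference, so there is nothing to reconcile beyond the global remark that $\phi\,dz^2$ is a well-defined holomorphic quadratic differential on the (connected, oriented) surface, which is what justifies the everywhere-or-isolated dichotomy globally rather than chart by chart.
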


\begin{remark}[\cite{R95}]
If $\varphi:\left(M^2,g\right)\to N^3(\epsilon)$ is a minimal surface, i.e., $H=0$, the conclusions of Theorem \ref{thm:cmc-holom} are still valid.
\end{remark}

Next, we recall a uniqueness result, that was first stated, as a remark, in \cite{FNO16}.

\begin{theorem}[\cite{FNO16}]\label{thm:uniqueness}
Let $\left(M^2,g\right)$ be an abstract surface and $\epsilon\in\mathbb{R}$ an arbitrarily fixed constant. If $M$ admits two biconservative immersions in $N^3(\epsilon)$, both with nowhere vanishing gradient of the mean curvature function, then the two immersions differ by an isometry of $N^3(\epsilon)$.
\end{theorem}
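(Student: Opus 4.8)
The natural strategy is to deduce the statement from the rigidity (uniqueness) part of the Fundamental Theorem of Surfaces in the space form $N^3(\epsilon)$: two isometric immersions of a connected surface that induce the same first fundamental form and the same shape operator differ by an isometry of the ambient space. The whole problem therefore reduces to showing that, for a biconservative immersion with $\grad f\neq 0$, the shape operator $A$ is completely determined by the induced metric $g$. Since the two given immersions $\varphi_1,\varphi_2$ induce one and the same $g$ on $M$, this will force their shape operators to coincide, and rigidity will finish the argument.

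First I would normalize each immersion by the convention recorded after Theorem \ref{thm:CMOP}, choosing $H/|H|$ as unit normal so that the mean curvature function satisfies $f>0$ and $f=\trace A$ is unambiguous. By Theorem \ref{thm:CMOP}(i) the Gaussian curvature of $(M,g)$ is $K=\det A+\epsilon=-3f^2/4+\epsilon$; since $K$ is intrinsic and $\epsilon-K>0$, this determines $f=\tfrac{2}{\sqrt3}\sqrt{\epsilon-K}$ as a positive smooth function depending only on $g$. Applying this to both immersions, which share the metric $g$ and hence the same $K$, gives $f_1=f_2=:f$, and therefore the same intrinsically computed gradient $\grad f$, which is nowhere zero by Theorem \ref{thm:CMOP}(ii).

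Next I would pin down the full endomorphism for each immersion. The biconservative equation $A_i(\grad f)=-\tfrac{f}{2}\grad f$ says that $\grad f$ is a principal direction with principal curvature $-f/2$; combined with $\trace A_i=f$, the second principal curvature equals $3f/2$ in the orthogonal direction. Working in the oriented orthonormal frame $\{\grad f/|\grad f|,X_2\}$, both shape operators are then diagonal with the same entries, independently of the choice of $X_2$, so $A_1=A_2$ as tensor fields on $M$. Consequently the two immersions have the same first and second fundamental forms.

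Finally I would invoke the rigidity part of the Fundamental Theorem of Surfaces in $N^3(\epsilon)$ (the codimension is one, so the normal bundle is flat and there is no additional normal-connection data to match) to obtain an isometry $\Psi$ of $N^3(\epsilon)$ with $\varphi_2=\Psi\circ\varphi_1$. The only delicate points, which I expect to be the main obstacle, are the bookkeeping around orientations and normals — checking that the convention $f>0$ genuinely identifies the two unit normals, so that the \emph{scalar} second fundamental forms agree and not merely the unsigned principal curvatures — and the fact that rigidity is applied over $M$ connected but not necessarily simply connected; here one uses that an ambient isometry matching the local data at a single point is globally determined, and patches the locally constructed isometries along paths in the connected surface.
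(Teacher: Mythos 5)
Your proposal is correct and takes essentially the same route as the argument behind this statement: the paper recalls Theorem \ref{thm:uniqueness} from \cite{FNO16} without reproducing its proof, and that proof is exactly yours — with the normalization $f>0$, the Gauss equation gives $f=\frac{2}{\sqrt{3}}\sqrt{\epsilon-K}$ intrinsically, the biconservative condition $A(\grad f)=-\frac{f}{2}\grad f$ together with $\trace A=f$ pins down the principal curvatures $-f/2$, $3f/2$ and their directions, so both immersions share first and second fundamental forms, and the rigidity part of the Fundamental Theorem of Surfaces (which needs only connectedness, not simple connectedness) supplies the ambient isometry. Your closing remarks on matching the unit normals via the sign convention and on patching local rigidity over a connected $M$ address precisely the delicate points.
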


In the same paper \cite{FNO16}, the authors proved the following characterization theorem for biconservative surfaces in $N^3(\epsilon)$ and found some properties of them.

\begin{theorem}[\cite{FNO16}]\label{thm:char}
Let $\left(M^2,g\right)$ be an abstract surface. Then $M$ can be locally isometrically embedded in a space form $N^3(\epsilon)$ as a biconservative surface with the gradient of the mean curvature function different from zero everywhere if and only if the Gaussian curvature $K$ satisfies $\epsilon-K(p)>0$, $(\grad K)(p)\neq 0$, for any $p\in M$, and its level curves are circles in $M$ with constant curvature
$$
\kappa=\frac{3|\grad K|}{8(\epsilon-K)}.
$$
\end{theorem}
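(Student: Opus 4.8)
The necessity is immediate: if such a biconservative embedding $\varphi:M^2\to N^3(\epsilon)$ with $\grad f\neq 0$ exists, then Theorem~\ref{thm:CMOP} yields at once all the stated conditions on $K$. The whole content is therefore the sufficiency, and my plan is to reconstruct the second fundamental form from the intrinsic data and then appeal to the Fundamental Theorem of Surfaces in $N^3(\epsilon)$.

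I would first recover the mean curvature. Part (i) of Theorem~\ref{thm:CMOP} dictates $K=\epsilon-3f^2/4$, so I set
$$
f=\frac{2}{\sqrt3}\sqrt{\epsilon-K},
$$
a well-defined positive function since $\epsilon-K>0$, satisfying $\grad f=-\frac{1}{\sqrt3}(\epsilon-K)^{-1/2}\grad K$; in particular $\grad f\neq 0$ because $\grad K\neq 0$. I then fix the orthonormal frame $X_1=\grad f/|\grad f|$, $X_2\perp X_1$, and define a symmetric operator $A$ by $AX_1=-\frac f2 X_1$ and $AX_2=\frac{3f}{2}X_2$, so that $\trace A=f$ and $\det A=-3f^2/4=K-\epsilon$. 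With this choice the Gauss equation $K=\det A+\epsilon$ holds by construction, and $A(\grad f)=-\frac f2\grad f$ is exactly the biconservativity condition; the only remaining obligation is the Codazzi equation $(\nabla_{X_1}A)X_2=(\nabla_{X_2}A)X_1$.

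Writing the connection functions of the frame as $\nabla_{X_1}X_1=\omega_1 X_2$ and $\nabla_{X_2}X_2=-\omega_2 X_1$, expanding the Codazzi equation and projecting onto $X_1$ and $X_2$ should reduce it to
$$
\omega_1(\lambda_1-\lambda_2)=X_2(\lambda_1),\qquad X_1(\lambda_2)=\omega_2(\lambda_1-\lambda_2),
$$
where $\lambda_1=-f/2$, $\lambda_2=3f/2$, so $\lambda_1-\lambda_2=-2f$. Since $X_2(f)=0$, the first relation becomes $\omega_1=0$ and the second $\omega_2=-\frac{3|\grad f|}{4f}$, and the crux of the argument is that both follow from the hypotheses. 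For $\omega_1$, the symmetry of $\Hess f$ gives $|\grad f|\,\omega_1=X_2(|\grad f|)$, so $\omega_1=0$ exactly when $|\grad f|$ is constant along the level curves; this constancy is forced by the assumption that each level curve is a circle, because on a level set $\epsilon-K$ is constant and the prescribed curvature $\frac{3|\grad K|}{8(\epsilon-K)}$ can be constant there only if $|\grad K|$, and hence $|\grad f|$, is. For $\omega_2$, the geodesic curvature of a level curve (an integral curve of $X_2$) equals $|\omega_2|$, and a short computation using $|\grad K|=\frac{3f}{2}|\grad f|$ and $\epsilon-K=3f^2/4$ shows $\frac{3|\grad f|}{4f}=\frac{3|\grad K|}{8(\epsilon-K)}=\kappa$, so the prescribed value of $\kappa$ delivers precisely the required $\omega_2$.

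Once Gauss and Codazzi are in hand, the Fundamental Theorem of Surfaces produces about each point a local isometric immersion into $N^3(\epsilon)$ with shape operator $A$, which becomes an embedding after shrinking the neighborhood; since $A(\grad f)=-\frac f2\grad f$ it is biconservative with $\grad f\neq 0$, as required. I expect the main obstacle to be the Codazzi step: the genuinely delicate point is to see that the single geometric hypothesis --- level curves that are circles of the prescribed curvature --- simultaneously encodes $\omega_1=0$ (through the constancy of $|\grad f|$ along level curves) and the correct value of $\omega_2$ (through the value of $\kappa$), and to keep the bookkeeping among $\grad K$, $\grad f$, and $\epsilon-K$ consistent with the sign conventions throughout.
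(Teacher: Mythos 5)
A preliminary remark: the paper itself does not prove Theorem \ref{thm:char} --- it is quoted from \cite{FNO16} --- and the closest the paper comes to a proof is the chain of equivalences recorded in Theorem \ref{thm:carac}; so your proposal is judged on its own merits. Your overall plan is the correct and natural one: necessity from Theorem \ref{thm:CMOP}; sufficiency by recovering $f=\frac{2}{\sqrt{3}}\sqrt{\epsilon-K}$ from the Gauss equation, observing that biconservativity together with $\trace A=f$ forces $A$ to have eigenvalues $-f/2$ on $X_1=\grad f/|\grad f|$ and $3f/2$ on $X_2$, reducing Codazzi to $\omega_1=0$ and $\omega_2=-\frac{3|\grad f|}{4f}$, and invoking the Fundamental Theorem of Surfaces. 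Your Hessian-symmetry argument for $\omega_1=0$ is correct (it is precisely the content of (i)$\Rightarrow$(ii) in Theorem \ref{thm:carac}), and so is the identity $\frac{3|\grad f|}{4f}=\frac{3|\grad K|}{8(\epsilon-K)}$.

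The gap is the sign of $\omega_2$. You read the hypothesis as prescribing the unsigned geodesic curvature, $|\omega_2|=\kappa$, and then assert that this ``delivers precisely the required $\omega_2$''. It does not: Codazzi requires the specific sign $\omega_2=-\kappa$, i.e., that the curvature vector $\nabla_{X_2}X_2=-\omega_2X_1$ of each level circle point in the direction of $\grad f$ (equivalently, away from $\grad K$), and this orientation is genuinely extra information which cannot be extracted from the unsigned hypothesis. Indeed, with the unsigned reading the sufficiency statement is false: take $g=e^{2\sigma(u)}\left(du^2+dv^2\right)$ with $\sigma$ solving $\sigma''=e^{14\sigma/3}-\epsilon e^{2\sigma}$ and $\sigma'>0$. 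Then $K=\epsilon-e^{8\sigma/3}$, so $\epsilon-K>0$ and $\grad K\neq 0$; the level curves of $K$ are the $v$-curves, of constant geodesic curvature $\sigma'e^{-\sigma}$, and a direct check gives $\sigma'e^{-\sigma}=\frac{3|\grad K|}{8(\epsilon-K)}$, so all of your hypotheses hold. But here $\nabla_{X_2}X_2=-\sigma'e^{-2\sigma}\partial_u$ points along $\grad K$ (since $K'=-\frac{8}{3}e^{8\sigma/3}\sigma'<0$), hence $\omega_2=+\kappa$; Codazzi fails for the forced candidate $A$, and it fails for $-A$ as well (Codazzi is invariant under $A\mapsto -A$), so this metric admits no local biconservative embedding in $N^3(\epsilon)$ with $\grad f\neq 0$. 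The theorem is therefore only correct if the curvature of the level curves is understood as the signed curvature with respect to the co-orientation determined by $\grad K$ --- exactly what conditions (ii)--(iii) of Theorem \ref{thm:carac} encode via $\nabla_{X_2}X_2=-\frac{3X_1K}{8(\epsilon-K)}X_1$ --- and your proof must either adopt that convention explicitly or supply the (missing, and in fact unavailable) argument that the sign comes for free.
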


\begin{theorem}[\cite{FNO16}]\label{thm:metric}
Let $\left(M^2,g\right)$ be an abstract surface with the Gaussian curvature $K$ and $\epsilon\in\mathbb{R}$ an arbitrarily fixed constant, and assume that $(\grad K)(p)\neq 0$ and $\epsilon-K(p)>0$ at any point $p\in M$. Let $X_1=\grad K/|\grad K|$ and $X_2\in C(TM)$ be two vector fields on $M$ such that $\{X_1(p),X_2(p)\}$ is a positively oriented orthonormal basis at any point $p\in M$. If the level curves of $K$ are circles in $M$ with constant curvature
$$
\kappa=\frac{3X_1K}{8(\epsilon-K)}=\frac{3|\grad K|}{8(\epsilon-K)},
$$
then, for any point $p_0\in M$, there exists a positively oriented parametrization $X=X(u,v)$ of $M$ in a neighborhood $U\subset M$ of $p_0$  such that
\begin{itemize}
\item[(i)] the curve $u\to X(u,0)$ is an integral curve of $X_1$ with $X(0,0)=p_0$ and $v\to X(u,v)$ is an integral curve of $X_2$, for any $u$ and $v$;

\item[(ii)] $K(u,v)=(K\circ X)(u,v)=(K\circ X)(u,0)=K(u)$, for any $(u,v)$;

\item[(iii)] for any pair $(u,v)$, we have
\begin{align*}
g_{11}(u,v)& = \frac{9}{64}\left(\frac{K^\prime(u)}{\epsilon-K(u)}\right)^2v^2+1,\\
g_{12}(u,v)& = -\frac{3K^\prime(u)}{8(\epsilon-K(u))}v,\quad g_{22}(u,v)=1;
\end{align*}
\item[(iv)] the Gaussian curvature $K=K(u)$ satisfies
$$
24(\epsilon-K)K^{\prime\prime}+33(K^\prime)^2+64K(\epsilon-K)^2=0.
$$
\end{itemize}
\end{theorem}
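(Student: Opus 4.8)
The plan is to build the adapted parametrization by flowing along the two frame directions and then to read the metric off the structure functions of $\{X_1,X_2\}$. First I would fix $p_0$, take the base curve $u\mapsto X(u,0)$ to be the unit-speed integral curve of $X_1$ with $X(0,0)=p_0$, and for each $u$ let $v\mapsto X(u,v)$ be the integral curve of $X_2$ issuing from $X(u,0)$. Since $X_1(p_0)$ and $X_2(p_0)$ are linearly independent, $X$ is a positively oriented local diffeomorphism near $(0,0)$, which is (i). Two facts are then immediate from the construction: as $X_v=X_2$ is a unit vector field we get $g_{22}=\langle X_v,X_v\rangle=1$; and as $X_2K=0$ along every $v$-curve, $K$ is constant in $v$, which is (ii). I write $K=K(u)$ and set $\lambda:=|\grad K|=X_1K>0$, noting $\lambda(u,0)=K'(u)$.

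Next I would record the structure functions of the frame, $\nabla_{X_1}X_1=\alpha X_2$ and $\nabla_{X_2}X_1=\beta X_2$ (hence $\nabla_{X_1}X_2=-\alpha X_1$ and $\nabla_{X_2}X_2=-\beta X_1$). Here $|\beta|$ is the geodesic curvature of the level curves, so the hypothesis forces $\beta=\kappa=\tfrac{3\lambda}{8(\epsilon-K)}$ after fixing the orientation, while the symmetry of $\Hess K$ applied to $\grad K=\lambda X_1$ gives $\alpha=(X_2\lambda)/\lambda$. The heart of the argument is to show $\alpha\equiv0$: along each level curve both $K$ and $\kappa$ are constant (the constancy of $\kappa$ is exactly the ``circle'' hypothesis), so $\lambda=\tfrac{8}{3}(\epsilon-K)\kappa$ is constant along level curves, i.e. $X_2\lambda=0$, whence $\alpha\equiv0$ and $\beta=\beta(u)=\tfrac{3K'(u)}{8(\epsilon-K(u))}$ is independent of $v$. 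I expect this extraction of $\alpha\equiv0$ from the constancy of the curvature of the level curves to be the main obstacle; it is precisely what forces the metric to be rigid.

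With the frame understood, I would write $X_u=pX_1+qX_2$, so that $g_{12}=q$ and $g_{11}=p^2+q^2$, with initial data $p(u,0)=1$ and $q(u,0)=0$. Using $[X_u,X_v]=0$ together with the structure functions, and differentiating $p=\langle X_u,X_1\rangle$ and $q=\langle X_u,X_2\rangle$ in $v$, I obtain the linear system $\partial_v p=-\alpha p$ and $\partial_v q=-\beta p$. Since $\alpha\equiv0$ and $\beta=\beta(u)$, this integrates at once to $p\equiv1$ and $q=-\beta v$, so that $g_{12}=-\beta v$ and $g_{11}=1+\beta^2v^2$; substituting $\beta$ gives exactly the formulas in (iii) (and incidentally $\det g\equiv1$).

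Finally, for (iv) I would feed $\alpha\equiv0$ and $\beta=\kappa$ into the Gauss equation, which in this frame reads $K=-X_1\beta-\beta^2$. Since $u$ is arclength along the gradient lines, $X_1$ acts as $d/du$ on functions of $u$ alone, so a direct simplification of $K=-\beta'-\beta^2$ with $\beta=\tfrac{3K'}{8(\epsilon-K)}$ produces $24(\epsilon-K)K''+33(K')^2+64K(\epsilon-K)^2=0$, which is (iv). Alternatively, one may compute $|\grad K|^2=(K')^2$ and $\Delta K=-(K''+\beta K')$ directly from the metric in (iii) and substitute into Theorem \ref{thm:CMOP}(iii), available here through the biconservative embedding of Theorem \ref{thm:char}; both routes yield the same ODE.
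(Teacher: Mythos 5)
Your proof is correct: I checked the frame computations (the Hessian-symmetry identity $\alpha\lambda=X_2\lambda$, the system $\partial_v p=-\alpha p$, $\partial_v q=-\beta p$ coming from $[X_u,X_v]=0$, and the reduction of the Gauss equation $K=-X_1\beta-\beta^2$ with $\beta=\tfrac{3K'}{8(\epsilon-K)}$ to $24(\epsilon-K)K''+33(K')^2+64K(\epsilon-K)^2=0$), and they all hold. One caveat on the comparison itself: this paper does not prove Theorem \ref{thm:metric} at all --- it is recalled from \cite{FNO16} --- so the only benchmark here is the toolkit the paper records. Measured against that, your argument is essentially the expected one: your central step, deducing $X_2\lambda=0$ (hence $\alpha\equiv 0$) and $\nabla_{X_2}X_2=-\kappa X_1$ from the circle hypothesis, is exactly the implication (i)$\Rightarrow$(iii) of Theorem \ref{thm:carac}, which the paper states as known; you could have invoked it verbatim and shortened the proof, though your self-contained derivation via $\Hess K$ is clean. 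Two minor remarks. First, ``after fixing the orientation'' deserves one more sentence: the unsigned curvature of the level curves only gives $|\beta|=\kappa$, and the choice $\beta=+\kappa$ (i.e., $\nabla_{X_2}X_2=-\kappa X_1$) is the convention of signed geodesic curvature relative to the normal $-X_1$ determined by the positively oriented frame, precisely the convention made explicit in Theorem \ref{thm:carac}(ii)--(iii); with the opposite sign one would land on $g_{12}=+\tfrac{3K'}{8(\epsilon-K)}v$ and on the ODE $24(\epsilon-K)K''+15(K')^2-64K(\epsilon-K)^2=0$, so this is a point where the convention genuinely changes the conclusion. Second, your ``alternative'' route to (iv) through Theorem \ref{thm:char} and Theorem \ref{thm:CMOP}(iii) risks circularity, since the natural proof of the characterization Theorem \ref{thm:char} in \cite{FNO16} goes through the very normal form you are proving; keep your primary, frame-based route as the actual argument.
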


Now, using Theorems \ref{thm:uniqueness} and \ref{thm:metric}, we can prove that any biconservative immersion from the above abstract domain in $N^3(\epsilon)$ has the property $\grad f\neq 0$ at any point. More precisely, we have

\begin{theorem}[\cite{NPhD17}]\label{thm:CMC-bicons}
Let $\left(M^2,g\right)$ be an abstract surface and $\epsilon\in\mathbb{R}$ an arbitrarily fixed constant. Assume that $\epsilon-K>0$ and $\grad K\neq0$ at any point of $M$, and the level curves of $K$ are circles in $M$ with constant curvature
$$
\kappa=\frac{3|\grad K|}{8(\epsilon-K)}.
$$
If there exists a biconservative immersion $\varphi:\left(M^2,g\right)\to N^3(\epsilon)$, then $\grad f\neq 0$ and $f>0$ at any point of $M$. Moreover, the immersion $\varphi$ is unique.
\end{theorem}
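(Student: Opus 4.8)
The plan is to reduce everything to showing that every biconservative immersion $\varphi$ from $M$ automatically has $\grad f\neq 0$ everywhere; once this is known, the uniqueness of $\varphi$ is immediate from Theorem~\ref{thm:uniqueness}, and $f>0$ follows by choosing $H/|H|$ as unit normal. So I set $A=\{p\in M:(\grad f)(p)=0\}$, a closed set, and $W=M\setminus A$; I would argue by contradiction, assuming $A\neq\emptyset$, and aim to prove $A=\emptyset$.

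First I would exploit the algebraic relation of Theorem~\ref{thm:CMOP}(i). On the open set $W$ it gives $f^2=\tfrac43(\epsilon-K)$. If $W$ were dense in $M$, then by continuity this identity would hold on all of $M$; since $\epsilon-K>0$ everywhere, $f$ would be nowhere zero, and differentiating $f^2=\tfrac43(\epsilon-K)$ gives $2f\,\grad f=-\tfrac43\,\grad K$, so $\grad f\neq 0$ on $M$ (as $\grad K\neq 0$), contradicting $A\neq\emptyset$. Hence $A\neq\emptyset$ forces $\Int A\neq\emptyset$. On a connected component $\Omega$ of $\Int A$ the function $f$ is constant, so $\varphi|_\Omega$ is CMC, with $b:=|H|^2=f^2/4$ constant.

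The core of the argument is to rule out such a CMC region. Since $\epsilon-K>0$ and $b\geq 0$, we have $|H|^2+\epsilon-K>0$ on $\Omega$, so Lawson's Theorem~\ref{thm:cmc-holom} yields equation \eqref{h-holomorphic-equiv} on all of $\Omega$. On the other hand, the hypotheses on $M$ (the level curves of $K$ being circles of curvature $\kappa$) force the intrinsic biconservative equation Theorem~\ref{thm:CMOP}(iii) to hold on all of $M$, this being equivalent to Theorem~\ref{thm:metric}(iv). Subtracting the two equations to eliminate the term $|\grad K|^2$, and then using (iii) once more to eliminate $\Delta K$, I would obtain the first-order relation
\[
b\,|\grad K|^2=\tfrac43\,K(\epsilon-K)\bigl(b+\epsilon-K\bigr)\bigl(\epsilon-K+3b\bigr)
\]
on $\Omega$. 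If $b=0$ this reads $K(\epsilon-K)^3=0$, forcing $K\equiv 0$ on $\Omega$ and contradicting $\grad K\neq 0$. If $b>0$, I would pass to the local coordinates of Theorem~\ref{thm:metric}, where $K=K(u)$, $|\grad K|^2=(K')^2$, and $\grad K\neq 0$ gives $K'\neq 0$; differentiating the relation in $u$ and eliminating $K''$ by the biconservative ODE Theorem~\ref{thm:metric}(iv) produces, after multiplying out, a polynomial identity in $w:=\epsilon-K$ whose coefficient of $w^4$ is a nonzero constant (equal to $20$ in a suitable normalization). Since $K$, hence $w$, ranges over an interval, this nonzero polynomial would have to vanish identically, which is impossible.

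This contradiction gives $A=\emptyset$, so $\grad f\neq 0$ on $M$; then $f^2=\tfrac43(\epsilon-K)>0$ and, with the orientation above, $f>0$, while uniqueness follows from Theorem~\ref{thm:uniqueness}. The main obstacle is precisely the CMC step: one must show that the over-determined system consisting of the intrinsic biconservative equation and Lawson's holomorphicity equation admits no solution with non-constant $K$. This is where the explicit metric of Theorem~\ref{thm:metric} is indispensable — it turns both PDEs into ODEs for $K(u)$ with $|\grad K|^2=(K')^2$ and $\det g=1$ — and where the non-vanishing of the top coefficient of the resulting polynomial closes the argument.
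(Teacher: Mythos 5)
Your proposal is correct, and its skeleton coincides with the paper's: assuming the zero set of $\grad f$ is non-empty, its interior must be non-empty (otherwise $f^2=\tfrac{4}{3}(\epsilon-K)$ extends by density and forces $\grad f\neq 0$ everywhere), and on a connected component of that interior the immersion is CMC or minimal; there one plays the intrinsic equation against Lawson's equation \eqref{h-holomorphic-equiv}, and the conclusion then follows by density and Theorem \ref{thm:uniqueness}. Your use of the PDE of Theorem \ref{thm:CMOP}(iii) is legitimate because, as you note, it is obtained from the intrinsic hypotheses via its equivalence with Theorem \ref{thm:metric}(iv) (the theorem itself presupposes $\grad f\neq0$, so it cannot be quoted directly); this is the same device as in the paper, which works with \eqref{eq:ec-K''} in the coordinates of Theorem \ref{thm:metric}. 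Your first-order relation is exactly the paper's \eqref{eq:double-derivative-K}, since $\left(|H|^2+\epsilon-K\right)^2+2|H|^2\left(|H|^2+\epsilon-K\right)=\left(|H|^2+\epsilon-K\right)\left(3|H|^2+\epsilon-K\right)$. The genuine difference is how you kill the CMC case $b=|H|^2>0$: the paper substitutes into \eqref{eq:double-derivative-K} the first integral $(K')^2=\tfrac{64}{3}K^3-\tfrac{640}{9}\epsilon K^2+\alpha(\epsilon-K)^{11/4}+\tfrac{704}{9}\epsilon^2K-\tfrac{256}{9}\epsilon^3$ of \eqref{eq:ec-K''} and must clear the fractional power, ending with a degree-$16$ polynomial equation for $K$ with constant coefficients (leading term $256K^{16}$), whereas you differentiate $b(K')^2=\tfrac{4}{3}P(K)$, with $P(K)=K(\epsilon-K)(b+\epsilon-K)(3b+\epsilon-K)$, cancel $K'\neq0$ to get $K''=\tfrac{2}{3b}P'(K)$, and substitute both into \eqref{eq:ec-K''}; writing $w=\epsilon-K$ this gives the identity $12wP'(K)+33P(K)+48bKw^2=0$, a quartic in $w$ with constant coefficients whose leading coefficient is $15$ (not $20$ as you guessed, but only its nonvanishing matters), which is impossible as $w$ ranges over a nondegenerate interval. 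Your variant buys a more elementary ending — no integration constant $\alpha$, no fractional exponent, a quartic instead of a degree-$16$ polynomial — while the paper's route reuses an explicit first integral that serves the authors elsewhere; your minimal case ($K(\epsilon-K)^3=0$, hence $K\equiv0$, contradicting $\grad K\neq0$) is the paper's degree-$4$ case read off directly.
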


\begin{proof}
Assume that there exists a biconservative immersion $\varphi:\left(M^2,g\right)\to N^3(\epsilon)$. First, we will prove that $\grad f\neq 0$ at any point of an open dense subset of $M$. Indeed, if the set
$$
\Omega=\left\{p\in M \ :\ (\grad f)(p)\neq 0\right\}
$$
were not dense, then $\grad f$ would vanish on $M\setminus \overline{\Omega}$, which is an open, non-empty set. Let us denote by $V$ a connected component of $M\setminus \overline{\Omega}$. We note that $V$ is also open in $M$. Using Theorem \ref{thm:metric}, we have
\begin{equation}\label{eq:ec-K''}
24(\epsilon-K)K^{\prime\prime}+33\left(K^\prime\right)^2+64K(\epsilon-K)^2=0.
\end{equation}
On the other hand, as $\varphi$ is $CMC$ or minimal on $V$, using the same local coordinates $(u,v)$ as above, and the fact that $|H|^2+\epsilon-K>0$, equation \eqref{h-holomorphic-equiv} can be rewritten as
\begin{equation}\label{eq:ec1-K''}
8(\epsilon-K)K^{\prime\prime}+\left(\frac{8(\epsilon-K)}{|H|^2+\epsilon-K}+3\right)\left(K^\prime\right)^2+32K(\epsilon-K)\left(|H|^2+\epsilon-K\right)=0.
\end{equation}
From equations \eqref{eq:ec-K''} and \eqref{eq:ec1-K''}, one obtains
\begin{equation}\label{eq:double-derivative-K}
3|H|^2\left(K^\prime\right)^2-4K(\epsilon-K)\left(\left(|H|^2+\epsilon-K\right)^2+2|H|^2\left(|H|^2+\epsilon-K\right)\right)=0.
\end{equation}
We note that
$$
\left(K^\prime\right)^2=\frac{64}{3}K^3-\frac{640}{9}\epsilon K^2+\alpha(\epsilon-K)^{11/4}+\frac{704}{9}\epsilon^2K-\frac{256}{9}\epsilon^3,
$$
where $\alpha\in\mathbb{R}$ is a constant, is a first integral of \eqref{eq:ec-K''}. Now, if we replace $\left(K^\prime\right)^2$ in \eqref{eq:double-derivative-K}, one gets the following. If $\varphi$ is minimal, then $K$ has to satisfy a fourth order polynomial equation with constant coefficients, with the leading term $4K^4$, or, if $\varphi$ is $CMC$, we obtain that $K$ has to satisfy a $16$-th order polynomial equation with constant coefficients, with the leading term $256K^{16}$. In both situations, we come to the conclusion that $K$ has to be a constant, and this is a contradiction.

Thus, $\grad f\neq 0$ on $\Omega$, which is an open dense subset of $M$. From the Gauss equation, $K=\epsilon+\det A$, we obtain on $\Omega$ that
$$
f^2=\frac{4}{3}(\epsilon-K).
$$
As $\Omega$ is dense in $M$, it follows that, in fact, the above relation holds on whole $M$. Therefore, since $\epsilon-K>0$ and $\grad K\neq 0$ on $M$, one obtains $f>0$ and $\grad f\neq 0$ at any point of $M$.

Finally, the uniqueness of $\varphi$ follows from Theorem \ref{thm:uniqueness}.
\end{proof}

Now, using Theorems \ref{thm:char} and \ref{thm:CMC-bicons} we can state the following result.

\begin{theorem}[\cite{NPhD17}]\label{thm:reformulate}
Let $\left(M^2,g\right)$ be an abstract surface and $\epsilon\in\mathbb{R}$ an arbitrarily given constant. Assume that $\epsilon-K>0$ and $\grad K\neq0$ at any point of $M$, and the level curves of $K$ are circles in $M$ with constant curvature
$$
\kappa=\frac{3|\grad K|}{8(\epsilon-K)}.
$$
Then, locally, there exists a unique biconservative embedding $\varphi:\left(M^2,g\right)\to N^3(\epsilon)$. Moreover, the mean curvature function is positive and its gradient is different from zero at any point of $M$.
\end{theorem}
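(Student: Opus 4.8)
The plan is to assemble the conclusion directly from Theorems \ref{thm:char} and \ref{thm:CMC-bicons}, since the hypotheses placed on $\left(M^2,g\right)$ --- namely $\epsilon-K>0$ and $\grad K\neq 0$ at every point, together with the level curves of $K$ being circles of constant curvature $\kappa=3|\grad K|/(8(\epsilon-K))$ --- are exactly the geometric conditions appearing in both of those results. So no genuinely new computation is needed; the task is to combine the two statements correctly and to reconcile their slightly different formulations.

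First I would invoke the ``if'' direction of the characterization Theorem \ref{thm:char}. Fixing an arbitrary point $p_0\in M$, the three conditions above guarantee that there is a neighborhood $U\subset M$ of $p_0$ and a biconservative embedding $\varphi\colon U\to N^3(\epsilon)$ whose mean curvature function satisfies $\grad f\neq 0$ everywhere on $U$. This settles local existence and already yields the nonvanishing of $\grad f$.

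Next I would apply Theorem \ref{thm:CMC-bicons} to the neighborhood $U$, which inherits all the hypotheses of $M$. The embedding $\varphi$ is in particular a biconservative immersion of $\left(U,g\right)$, so Theorem \ref{thm:CMC-bicons} applies and gives at once $f>0$ and $\grad f\neq 0$ at every point of $U$, as well as the uniqueness of such an immersion. Since by Theorem \ref{thm:uniqueness} this uniqueness is to be understood up to an isometry of $N^3(\epsilon)$, and since the image of an embedding under an isometry of the ambient space form is again an embedding, the unique biconservative immersion is precisely the embedding $\varphi$ produced by Theorem \ref{thm:char}. This establishes local existence and uniqueness of the biconservative embedding, together with $f>0$ and $\grad f\neq 0$ on $U$.

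The one point I would be most careful about --- and the only real subtlety --- is the mismatch in language between the two theorems: Theorem \ref{thm:char} produces an embedding but makes no uniqueness claim, whereas Theorem \ref{thm:CMC-bicons} asserts uniqueness only among immersions and only up to isometries of the codomain. The argument above closes this gap by observing that the embedding is a particular immersion and that isometries of $N^3(\epsilon)$ carry embeddings to embeddings, so the unique immersion in the class is in fact realized by an embedding. Beyond this bookkeeping there is no obstacle, as the positivity $f>0$ (and the underlying relation $f^2=\tfrac{4}{3}(\epsilon-K)$) are already contained in the proof of Theorem \ref{thm:CMC-bicons}.
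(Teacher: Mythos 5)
Your proposal is correct and follows exactly the paper's intended argument: the paper states Theorem \ref{thm:reformulate} as a direct consequence of Theorems \ref{thm:char} and \ref{thm:CMC-bicons}, with no further proof given. Your careful reconciliation of the embedding-versus-immersion language and of uniqueness up to ambient isometries is a reasonable spelling-out of the bookkeeping the paper leaves implicit.
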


Below, we give some equivalent conditions with the hypothesis from the above theorem which will be very useful in the construction from the next section.

\begin{theorem}[\cite{FNO16,N16,NO17}]\label{thm:carac}
Let $\left(M^2,g\right)$ be an abstract surface and $\epsilon\in\mathbb{R}$ an arbitrarily fixed constant. Assume that $\epsilon-K(p)>0$ and $(\grad K)(p)\neq 0$ at any point $p\in M$. Let $X_1=\grad K /|\grad K|$ and $X_2\in C(TM)$ be two vector fields on $M$ such that $\left\{X_1(p),X_2(p)\right\}$ is a positively oriented orthonormal basis at any point $p\in M$. Then, the following conditions are equivalent:
\begin{itemize}
  \item [(i)] the level curves of $K$ are circles in $M$ with constant curvature
  $$
  \kappa=\frac{3|\grad K|}{8(\epsilon-K)}=\frac{3X_1K}{8(\epsilon-K)};
  $$
  \item [(ii)]
  $$
  X_2\left(X_1K\right)=0 \quad \text{and} \quad \nabla_{X_2}X_2=\frac{-3X_1K}{8(\epsilon-K)}X_1;
  $$
  \item [(iii)]
  $$
  \nabla_{X_1}X_1=\nabla_{X_1}X_2=0,\quad \nabla_{X_2}X_2=-\frac{3X_1K}{8(\epsilon-K)}X_1,\quad \nabla_{X_2}X_1=\frac{3X_1K}{8(\epsilon-K)}X_2.
  $$
  \item [(iv)] the metric $g$ can be locally written as $g=e^{2\sigma}\left(du^2+dv^2\right)$, where $(u,v)$ are positively oriented local coordinates, and $\sigma=\sigma(u)$ satisfies the equation
      $$
      \sigma^{\prime\prime}=e^{-2\sigma/3}-\epsilon e^{2\sigma}
      $$
      and the condition $\sigma^\prime>0$; moreover, the solutions of the above equation, $u=u(\sigma)$, are
      $$
      u=\int_{\sigma_0}^{\sigma}\frac{d\tau}{\sqrt{-3e^{-2\tau/3}-\epsilon e^{2\tau}+a}}+u_0,
      $$
      where $\sigma$ is in some open interval $I$, $\sigma_0\in I$ and $a,u_0\in \mathbb{R}$ are constants.
\end{itemize}
\end{theorem}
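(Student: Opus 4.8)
The plan is to translate all four statements into the language of the orthonormal moving frame $\{X_1,X_2\}$ and its connection, and then prove the equivalences by pairing. Write $\nabla_YX_1=\omega(Y)X_2$ and $\nabla_YX_2=-\omega(Y)X_1$ for the connection $1$-form $\omega$ of the frame, let $\{\theta^1,\theta^2\}$ be the dual coframe, and set $\kappa=\tfrac{3X_1K}{8(\epsilon-K)}$. Since $X_1=\grad K/|\grad K|$, I record at the outset that $X_1K=|\grad K|>0$ and $X_2K=0$, so the level curves of $K$ are precisely the integral curves of $X_2$. The computational heart of (ii)$\Leftrightarrow$(iii) is the symmetry of the Hessian: computing $\Hess K(X_1,X_2)$ and $\Hess K(X_2,X_1)$ from $\grad K=(X_1K)X_1$ gives
\[
(X_1K)\,\omega(X_1)=X_2(X_1K),
\]
so that $\omega(X_1)=0$ if and only if $X_2(X_1K)=0$.

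Granting this, both (ii) and (iii) reduce to the single pair $\omega(X_1)=0$ and $\omega(X_2)=\kappa$: the relations $\nabla_{X_2}X_2=-\omega(X_2)X_1$ and $\nabla_{X_2}X_1=\omega(X_2)X_2$ are equivalent to each other and pin down $\omega(X_2)$, while $\nabla_{X_1}X_1=\nabla_{X_1}X_2=0$ and $X_2(X_1K)=0$ both express $\omega(X_1)=0$. For (i)$\Leftrightarrow$(iii) I would use that the geodesic curvature of an integral curve of the unit field $X_2$ equals $\omega(X_2)$, up to the fixed orientation sign; hence ``the level curves are circles of curvature $\tfrac{3|\grad K|}{8(\epsilon-K)}$'' says exactly that $\omega(X_2)=\kappa$ and that this quantity is constant along each level curve. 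As $K$ is constant along $X_2$, that constancy is $X_2(X_1K)=0$, i.e. $\omega(X_1)=0$, which closes the loop with (iii).

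The substantial implication is (iii)$\Rightarrow$(iv). From $\omega(X_1)=0$ the structure equations give $d\theta^1=\omega(X_1)\,\theta^1\wedge\theta^2=0$, so by the Poincaré lemma $\theta^1=d\tilde u$ locally, with $\tilde u$ a function of $K$ alone. Next $d\theta^2=\omega(X_2)\,\theta^1\wedge\theta^2$, so an integrating factor $e^{-h(\tilde u)}$ with $h'=\omega(X_2)$ makes $e^{-h}\theta^2$ closed, whence $e^{-h}\theta^2=d\tilde v$; then $g=d\tilde u^2+e^{2h}\,d\tilde v^2$, and the substitution $du=e^{-h}\,d\tilde u$, $v=\tilde v$, puts the metric in the isothermal form $g=e^{2\sigma}(du^2+dv^2)$ with $\sigma=\sigma(u)$. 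Using $K=-e^{-2\sigma}\sigma''$ for such a metric together with $\sigma'=\omega(X_2)e^{\sigma}$ and $\omega(X_2)=\kappa$, one integrates to obtain $\epsilon-K=C_0\,e^{-8\sigma/3}$ and hence $\sigma''=C_0\,e^{-2\sigma/3}-\epsilon e^{2\sigma}$; the residual scaling freedom $u\mapsto au$, $\sigma\mapsto\sigma+b$ normalizes $C_0=1$, while $\sigma'=\omega(X_2)e^\sigma>0$ because $\kappa>0$. A single quadrature of the normalized ODE (multiply by $\sigma'$) gives $(\sigma')^2=-3e^{-2\sigma/3}-\epsilon e^{2\sigma}+a$, and $\sigma'>0$ then yields the stated integral formula for $u=u(\sigma)$.

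Finally, for (iv)$\Rightarrow$(iii) I would compute directly: from $g=e^{2\sigma(u)}(du^2+dv^2)$ and the ODE one gets $K=\epsilon-e^{-8\sigma/3}$, so $\epsilon-K>0$ and $\grad K\neq0$ hold automatically, $X_1=e^{-\sigma}\partial_u$ and $X_2=e^{-\sigma}\partial_v$ form the adapted frame, and the Christoffel symbols of a conformal metric produce the relations in (iii) with $\omega(X_2)=\kappa$. I expect the only genuinely delicate point to be the passage (iii)$\Rightarrow$(iv): constructing the isothermal coordinates in which the conformal factor depends on $u$ only, and in particular pinning down $C_0=1$ by exploiting the coordinate scaling freedom; the remaining implications are bookkeeping with the moving frame and the Gauss equation.
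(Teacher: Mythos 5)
Your proposal is correct, but note that this paper does not actually prove Theorem \ref{thm:carac}: it is recalled, with citations, from \cite{FNO16,N16,NO17}, so there is no internal proof to compare against. Your argument is a legitimate, self-contained route, and it differs in flavor from what the cited sources do (as reflected in Theorem \ref{thm:metric} of this paper): there, the local coordinates are built from the flows of $X_1$ and $X_2$ themselves, producing the non-diagonal metric $g_{11}=\frac{9}{64}\bigl(K'/(\epsilon-K)\bigr)^2v^2+1$, $g_{12}=-\frac{3K'}{8(\epsilon-K)}v$, $g_{22}=1$, and the isothermal form (iv) is then obtained separately in \cite{N16,NO17}; you instead encode everything in the single connection form $\omega$ ($\omega(X_1)=0$, $\omega(X_2)=\kappa$) and get isothermal coordinates directly from $d\theta^1=0$ and an integrating factor for $\theta^2$, which is a cleaner and more unified derivation of (iv). Your key computations check out: the Hessian symmetry identity $(X_1K)\,\omega(X_1)=X_2(X_1K)$ is exactly right and does reduce (ii) and (iii) to the same pair of conditions, and the quadrature $(\sigma')^2=-3e^{-2\sigma/3}-\epsilon e^{2\sigma}+a$ matches the stated integral formula.

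Three points deserve to be made explicit. First, the integrating factor step silently requires $\omega(X_2)$ to be a function of $\tilde u$ alone; this is true, but only because (iii) gives $\omega(X_1)=0$, hence $X_2(X_1K)=0$, hence $X_2\kappa=0$, so you should say so, since it is where the hypothesis enters. Second, the normalization of $C_0$: the admissible change is $(u,v)\mapsto(au,av)$ with $\bar\sigma(\bar u)=\sigma(a\bar u)+\log a$ (scaling $u$ alone, or translating $\sigma$ independently, destroys either the isothermal form or the metric); under it $C_0\mapsto a^{8/3}C_0$ while the coefficient $\epsilon$ of $e^{2\sigma}$ is unchanged, so $a=C_0^{-3/8}$ indeed normalizes $C_0=1$ --- this computation should appear, as it is the one place where the exponents $-2/3$ and $2$ conspire. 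Third, your phrase ``up to the fixed orientation sign'' in (i)$\Leftrightarrow$(iii) should be pinned down: (i) must be read with the signed curvature convention $\nabla_{X_2}X_2=-\kappa X_1$ (curvature vector pointing away from $\grad K$), which is the convention of \cite{CMOP14,FNO16}; with an unsigned reading of ``curvature'' one would also have to discuss the branch $\omega(X_2)=-\kappa$, which leads to a different ODE and is excluded only by convention.
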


\section{Complete biconservative surfaces in $\mathbb{S}^3$}\label{sec-complete}

From Theorems \ref{thm:char} and \ref{thm:carac}, we have the following local intrinsic characterization of biconservative surfaces in $N^3(\epsilon)$: if we consider an abstract surface $\left(M^2,g\right)$ with $\epsilon-K(p)>0$ and $(\grad K)(p)\neq 0$, for any $p\in M$, then it locally admits a (unique) biconservative immersion in $N^3(\epsilon)$ with nowhere vanishing gradient of the mean curvature function if and only if the metric $g$ can be locally written as $g(u,v)=e^{2\sigma(u)}\left(du^2+dv^2\right)$, where $\sigma'(u)> 0$, for any $u$, and $u=u(\sigma)$ is given by
$$
u(\sigma)=\int_{\sigma_0}^{\sigma}\frac{d\tau}{\sqrt{-3e^{-2\tau/3}-\epsilon e^{2\tau}+a}}+u_0,
$$
$a$ and $u_0$ being real constants.

With the new coordinates $(\sigma,v)$ the metric $g$ can be written as
$$
g_{\epsilon,a}(\sigma,v)=e^{2\sigma}\left(\frac{1}{-3e^{-2\sigma/3}-\epsilon e^{2\sigma}+a}d\sigma^2+dv^2\right),
$$
and we have, for each $\epsilon$, a one parameter family of such metrics. In order to find a more convenient expression of the metric, we will change again the coordinates; considering $(\sigma,v)=\left(\log\left(3^{3/4}/\xi\right),\theta/3^{3/4}\right)$, $\xi>0$, $\theta\in\mathbb{R}$, and denoting $C=a\sqrt{3}/3\in\mathbb{R}$, one obtains
$$
g_{\epsilon,C}(\xi,\theta)=\frac{1}{\xi^2}\left(\frac{3}{-\xi^{8/3}+C\xi^2-3\epsilon}d\xi^2+d\theta^2\right),
$$
where $\theta\in\mathbb{R}$ and $\xi$ is positive and belongs to an open interval such that $T(\xi)=-\xi^{8/3}+C\xi^2-3\epsilon$ is positive. By a standard analysis, we determine the largest range of $\xi$ such that $T(\xi)>0$, and we come to the following conclusion.

\begin{theorem}
Let $\left(M^2,g(u,v)=e^{2\sigma(u)}\left(du^2+dv^2\right)\right)$ be an abstract surface, where $u=u(\sigma)$ is given by
$$
u(\sigma)=\int_{\sigma_0}^{\sigma}\frac{d\tau}{\sqrt{-3e^{-2\tau/3}-\epsilon e^{2\tau}+a}}+u_0, \qquad \sigma\in I,
$$
where $a$ and $u_0$ are real constants and $I$ is an open interval. Then $\left(M^2,g\right)$ is isometric to
$$
\left(D_{\epsilon,C},g_{\epsilon,C}\right)=\left(\left(\xi_{01},\xi_{02}\right)\times\mathbb{R}, g_{\epsilon,C}(\xi,\theta)=\frac{1}{\xi^2}\left(\frac{3}{-\xi^{8/3}+C\xi^2-3\epsilon}d\xi^2+d\theta^2\right)
\right),
$$
where $C$, $\xi_{01}$ and $\xi_{02}$ are as follows:
\begin{itemize}
  \item if $\epsilon=-1$, then
\begin{itemize}
    \item [(i)] if $C=0$, it follows that $\xi_{01}=0$, and $\xi_{02}=3^{3/8}$ is the vanishing point of $-\xi^{8/3}+3$;
    \item [(ii)] if $C<0$, it follows that $\xi_{01}=0$, and $\xi_{02}>0$ is the vanishing point of $-\xi^{8/3}+C\xi^2+3$;
    \item [(iii)] if $C>0$, it follows that $\xi_{01}=0$, and $\xi_{02}$ is the positive vanishing point of $-\xi^{8/3}+C\xi^2+3$ satisfying $\xi_{02}>\left(3C/4\right)^{3/2}$;
\end{itemize}
  \item if $\epsilon=0$, then $C>0$, $\xi_{01}=0$, and $\xi_{02}=C^{3/2}>\left(3C/4\right)^{3/2}$ is the positive vanishing point of $-\xi^{8/3}+C\xi^2$;
  \item if $\epsilon=1$, then $C>4/\sqrt{3}$ while $\xi_{01}\in\left(0,\left(3C/4\right)^{3/2}\right)$ and $\xi_{02}\in\left(\left(3C/4\right)^{3/2},\infty\right)$ are the vanishing points of $-\xi^{8/3}+C\xi^2-3$.
\end{itemize}
\end{theorem}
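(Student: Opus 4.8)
The plan is to reduce the whole statement to a one–variable calculus problem. The two successive changes of coordinates $(u,v)\mapsto(\sigma,v)\mapsto(\xi,\theta)$ described just before the statement are explicit diffeomorphisms, so the only computation needed in the first step is to confirm that they carry $g$ into $g_{\epsilon,C}$. Recalling $e^{\sigma}=3^{3/4}/\xi$ and $v=\theta/3^{3/4}$, one gets $e^{-2\sigma/3}=\xi^{2/3}/\sqrt3$, $e^{2\sigma}=3^{3/2}/\xi^2$, $d\sigma=-d\xi/\xi$, and the denominator $-3e^{-2\sigma/3}-\epsilon e^{2\sigma}+a$ becomes $\sqrt3\,T(\xi)/\xi^2$, where
$$
T(\xi)=-\xi^{8/3}+C\xi^2-3\epsilon,\qquad C=\tfrac{a}{\sqrt3}.
$$
Substituting and simplifying collapses the conformal factor to $1/\xi^2$ and yields exactly $g_{\epsilon,C}$. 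Since $g$ is a Riemannian metric precisely where this denominator is positive, the surface is isometric to $\big((\xi_{01},\xi_{02})\times\mathbb{R},g_{\epsilon,C}\big)$, where $(\xi_{01},\xi_{02})$ is the maximal open interval of positive $\xi$ on which $T(\xi)>0$. Everything then rests on describing that interval.

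The second step is a monotonicity analysis of $T$ on $(0,\infty)$. Differentiating,
$$
T'(\xi)=-\tfrac{8}{3}\xi^{5/3}+2C\xi=\xi\Big(2C-\tfrac{8}{3}\xi^{2/3}\Big),
$$
so for $C\le 0$ one has $T'<0$ throughout $(0,\infty)$, while for $C>0$ there is a single interior critical point $\xi_c=\big(\tfrac{3C}{4}\big)^{3/2}$, necessarily a global maximum, at which a short computation gives
$$
T(\xi_c)=\frac{27\,C^4}{256}-3\epsilon.
$$
The boundary data are $T(0^+)=-3\epsilon$ and $T(\xi)\to-\infty$ as $\xi\to\infty$. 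These three pieces of information — the endpoint values, the location $\xi_c$, and the maximal value — pin down the sign pattern of $T$ in every case.

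The third step is the case distinction, now immediate. For $\epsilon=-1$ one has $T(0^+)=3>0$; if $C\le0$ (cases (i)–(ii)) $T$ is strictly decreasing with a unique zero $\xi_{02}$, giving $\xi_{01}=0$, whereas if $C>0$ (case (iii)) the maximum $T(\xi_c)=27C^4/256+3>0$ forces the unique positive zero to lie to the right of $\xi_c$, whence $\xi_{02}>(3C/4)^{3/2}$ and again $\xi_{01}=0$. For $\epsilon=0$, $T(\xi)=\xi^2\big(C-\xi^{2/3}\big)$ is positive for some $\xi>0$ only if $C>0$, and then $T>0$ exactly on $(0,C^{3/2})$, with $C^{3/2}>(3C/4)^{3/2}$ since $3/4<1$. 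For $\epsilon=1$, $T(0^+)=-3<0$ and $T\to-\infty$, so $T$ can be positive only if its maximum is, i.e.\ $T(\xi_c)=27C^4/256-3>0$; this is equivalent to $C^4>256/9$, that is $C>4/\sqrt3$, and in that regime $T$ has exactly two positive zeros $\xi_{01}<\xi_c<\xi_{02}$ and is positive between them, giving the bracketing $\xi_{01}\in(0,(3C/4)^{3/2})$ and $\xi_{02}\in((3C/4)^{3/2},\infty)$.

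I expect the only delicate point to be bookkeeping rather than genuine mathematics: one must track which branch of $T$ is relevant (so that, for instance, in the $\epsilon=-1$, $C>0$ subcase the right zero is selected), and one must justify that $(\xi_{01},\xi_{02})$ is truly maximal, i.e.\ that $T$ does not become positive again outside it — which is immediate from $T$ being eventually strictly decreasing to $-\infty$ with at most one interior critical point. The threshold $C>4/\sqrt3$ for $\epsilon=1$ is exactly the transition $T(\xi_c)=0$, and the boundary value $C=4/\sqrt3$ (a double root with $T\le0$) is correctly excluded.
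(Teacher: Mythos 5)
Your proposal is correct and takes exactly the route of the paper: the paper performs the same two coordinate changes $(u,v)\mapsto(\sigma,v)\mapsto(\xi,\theta)$ with $C=a/\sqrt{3}$ to arrive at $g_{\epsilon,C}$, and then dismisses the case distinction with ``by a standard analysis, we determine the largest range of $\xi$ such that $T(\xi)>0$''. Your monotonicity argument for $T(\xi)=-\xi^{8/3}+C\xi^2-3\epsilon$ --- the unique critical point $\xi_c=(3C/4)^{3/2}$ when $C>0$, the maximum value $27C^4/256-3\epsilon$, and the endpoint behaviour $T(0^+)=-3\epsilon$, $T(\xi)\to-\infty$ --- is precisely that omitted standard analysis, carried out correctly in all cases, including the threshold $C>4/\sqrt{3}$ for $\epsilon=1$.
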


\begin{remark}
We call the surface $\left(D_{\epsilon,C},g_{\epsilon,C}\right)$ an abstract standard biconservative surface, and, for each $\epsilon$, we have a one-parameter family of abstract standard biconservative surfaces indexed by $C$.
\end{remark}

\begin{remark}
We note that, when $\epsilon=1$, we have
$$
\lim_{\xi\searrow\xi_{01}}\left|\frac{\partial}{\partial \xi}\right|^2= \lim_{\xi\nearrow\xi_{02}}\left|\frac{\partial}{\partial \xi}\right|^2=\infty,
$$
and therefore, the metric $g_{1,C}$ blows up at the boundary of $D_{1,C}$.
\end{remark}

The surface $\left(D_{\epsilon,C},g_{\epsilon,C}\right)$ is not complete since the geodesic $\theta=\theta_0$ cannot be defined on the whole $\mathbb{R}$, but only on a finite open interval. By standard computations it can be proved that its Gaussian curvature is given by
\begin{equation*}
K_{\epsilon,C}(\xi,\theta)=K_{\epsilon,C}(\xi)=-\frac{\xi^{8/3}}{9}+\epsilon.
\end{equation*}
Therefore,
\begin{equation}\label{eq:K-DC}
K_{\epsilon,C}'(\xi)=-\frac{8}{27}\xi^{5/3}<0
\end{equation}
and
\begin{equation*}
\grad K_{\epsilon,C}=\frac{\xi^2\left(-\xi^{8/3}+C\xi^2-3\epsilon\right)}{3} K'_{\epsilon,C}(\xi)\frac{\partial}{\partial \xi}
\end{equation*}
is nowhere vanishing on $D_{\epsilon,C}$. Obviously,
$$
\lim_{\xi\searrow\xi_{01}}(\grad K_{\epsilon,C})(\xi,\theta)=0\quad \text{and}\quad \lim_{\xi\nearrow\xi_{02}}(\grad K_{\epsilon,C})(\xi,\theta)=0, \qquad \theta\in\mathbb{R}.
$$
As the metric $g_{\epsilon,C}$ is not complete, as a way to obtain a complete one, denoted $\tilde{g}_{\epsilon,C}$, we will change one more time the coordinates and then we will glue, in a simple way, two or more (isometric) metrics $g_{\epsilon,C}$.

So, we will continue with the following change of coordinates
$$
(\xi,\theta)=\left(\xi_0(\rho),\theta\right),
$$
where $\xi_0=\xi_0(\rho)$ is the inverse function of $\rho_0$ which is given by
\begin{equation}\label{eq:rhozero}
\rho_0(\xi)=-\int_{\xi_{00}}^{\xi}\sqrt{\frac{3}{\tau^2\left(-\tau^{8/3}+C\tau^2-3\epsilon\right)}}\ d\tau,
\end{equation}
$\xi_{00}$ being an arbitrarily fixed constant in $\left(\xi_{01},\xi_{02}\right)$.

We are allowed to make the above change since $\rho_0'(\xi)<0$ for any $\xi$, i.e., $\rho_0$ is a strictly decreasing function. Moreover, we have the following lemma.

\begin{lemma}\label{lemma1}
Let the function $\rho_0:\left(\xi_{01},\xi_{02}\right)\to\mathbb{R}$ defined by \eqref{eq:rhozero}.
\begin{itemize}
  \item If $\epsilon=-1$, $\lim_{\xi\searrow \xi_{01}} \rho_0(\xi)= \lim_{\xi\searrow 0} \rho_0(\xi)=\infty$ and $\lim_{\xi\nearrow \xi_{02}} \rho_0(\xi)=\rho_{0,-1}$, where $\rho_{0,-1}$ is a negative real constant.
  \item If $\epsilon=0$, then $\lim_{\xi\searrow \xi_{01}} \rho_0(\xi)= \lim_{\xi\searrow 0} \rho_0(\xi)=\infty$ and $\lim_{\xi\nearrow \xi_{02}} \rho_0(\xi)=\rho_{0,-1}$, where $\rho_{0,-1}$ is a negative real constant (we preserve the same notation of the limit of $\rho_0$ when $\xi$ approaches $\xi_{02}$, as in the case $\epsilon=-1$).
  \item If $\epsilon=1$, $\lim_{\xi\searrow \xi_{01}} \rho_0(\xi)=\rho_{0,1}$ and $\lim_{\xi\nearrow \xi_{02}} \rho_0(\xi)=\rho_{0,-1}$, where $\rho_{0,1}$ is a positive real constant and $\rho_{0,-1}$ is a negative real constant.
\end{itemize}
\end{lemma}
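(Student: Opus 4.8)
The plan is to evaluate the behavior of the improper integral $\rho_0(\xi)$ near each endpoint $\xi_{01}$ and $\xi_{02}$, reducing everything to the convergence or divergence of integrals of the integrand $\sqrt{3/(\tau^2 T(\tau))}$, where $T(\tau)=-\tau^{8/3}+C\tau^2-3\epsilon$. Since $\rho_0$ is defined with a leading minus sign and $\xi_{00}\in(\xi_{01},\xi_{02})$ is fixed, as $\xi\searrow\xi_{01}$ the integral $\int_{\xi_{00}}^{\xi}$ runs backwards and $\rho_0$ tends to $+\infty$ if the integral diverges there, whereas as $\xi\nearrow\xi_{02}$ it tends to a negative value (or $-\infty$). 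So the whole lemma amounts to a convergence analysis of the integrand at the two ends, in each of the three cases for $\epsilon$.

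First I would treat the endpoint $\xi_{02}$, which is uniform across all three cases. Here $\xi_{02}>0$ is a simple zero of $T$, so near $\xi_{02}$ we have $T(\tau)\sim T'(\xi_{02})(\tau-\xi_{02})$ with $T'(\xi_{02})\neq 0$, and the integrand behaves like $\mathrm{const}\cdot(\xi_{02}-\tau)^{-1/2}$, whose integral converges. (I would check that $\xi_{02}$ is indeed a simple, not multiple, root; this uses the location constraints on $\xi_{02}$ relative to $(3C/4)^{3/2}$ recorded in the previous theorem, which guarantee $T'(\xi_{02})<0$.) Thus $\lim_{\xi\nearrow\xi_{02}}\rho_0(\xi)$ is a finite negative constant $\rho_{0,-1}$ in every case, matching the statement.

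Next I would analyze $\xi_{01}$, where the three cases genuinely differ. For $\epsilon=-1$ and $\epsilon=0$ we have $\xi_{01}=0$, and the key point is the factor $\tau^2$ in the denominator inside the square root: near $0$, $T(\tau)\to -3\epsilon$, which equals $3>0$ when $\epsilon=-1$ and we must handle the degenerate $T(0)=0$ when $\epsilon=0$. In the case $\epsilon=-1$, the integrand behaves like $\mathrm{const}\cdot\tau^{-1}$, so $\int_{0}$ diverges and $\rho_0\to+\infty$. In the case $\epsilon=0$ we have $T(\tau)=-\tau^{8/3}+C\tau^2=\tau^2(C-\tau^{2/3})$, so $\tau^2 T(\tau)=\tau^4(C-\tau^{2/3})$ and the integrand behaves like $\mathrm{const}\cdot\tau^{-2}$ near $0$, which again forces divergence and $\rho_0\to+\infty$. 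For $\epsilon=1$, instead, $\xi_{01}>0$ is a positive simple zero of $T$ (lying in $(0,(3C/4)^{3/2})$ by the previous theorem, which ensures it is simple with $T'(\xi_{01})>0$), so exactly the same square-root analysis as at $\xi_{02}$ applies and the limit is a finite positive constant $\rho_{0,1}$.

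The main obstacle, and the only genuinely delicate point, is confirming that $\xi_{01}$ and $\xi_{02}$ are \emph{simple} zeros of $T$ in the cases where they are positive, since a double zero would change the integrand's order from $(\cdot)^{-1/2}$ to $(\cdot)^{-1}$ and destroy convergence. This is where the inequalities on the location of the roots relative to the critical point $(3C/4)^{3/2}$ of $T$ are essential: $T$ has a single interior maximum on $(0,\infty)$ at $\xi=(3C/4)^{3/2}$, so the two roots bracketing it are automatically simple, and the sign of $T'$ at each root is determined. Establishing the signs of $\rho_{0,1}>0$ and $\rho_{0,-1}<0$ is then immediate from the sign convention in the definition of $\rho_0$ together with the choice $\xi_{00}\in(\xi_{01},\xi_{02})$, since $\rho_0(\xi_{00})=0$ and $\rho_0$ is strictly decreasing.
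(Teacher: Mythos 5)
Your proposal is correct and follows essentially the same route as the paper: the paper's proof (which writes out only the case $\epsilon=1$, citing \cite{NO19-H} for $\epsilon=-1$ and noting $\epsilon=0$ is analogous) likewise bounds away the factor $1/\tau$ and uses the limit comparison $\lim_{\xi\to\xi_{0i}}\sqrt{|\xi-\xi_{0i}|}\,\big(-\xi^{8/3}+C\xi^2-3\big)^{-1/2}<\infty$, i.e.\ the square-root (simple-zero) singularity of the integrand, to get finite limits at $\xi_{01}$, $\xi_{02}$, with the signs coming from $\rho_0(\xi_{00})=0$ and monotonicity. Your explicit remark that simplicity of the roots follows from their position relative to the unique critical point $\left(3C/4\right)^{3/2}$ of $T$ is a point the paper leaves implicit, but it is the same argument.
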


\begin{proof}
We note that the proof of this result, when $\epsilon=-1$, was given in \cite{NO19-H}. It is easy to see that, by similar arguments, the case $\epsilon=0$ also holds.

From now on, we will consider the case $\epsilon=1$. In order to compute the limits of $\rho_0$ when $\xi$ approaches $\xi_{01}$ and $\xi_{02}$, respectively, first we note that $\rho_0(\xi)$ is positive for any $\xi\in \left(\xi_{01},\xi_{00}\right)$, and
$$
\rho_0(\xi)<-\frac{\sqrt{3}}{\xi_{01}}\int_{\xi_{00}}^{\xi}\frac{1}{\sqrt{-\tau^{8/3}+C\tau^2-3}}\ d\tau, \qquad \xi\in \left(\xi_{01},\xi_{00}\right).
$$
Then, since
$$
\lim_{\xi\searrow\xi_{01}} \sqrt{\xi-\xi_{01}}\cdot\frac{1}{\sqrt{-\xi^{8/3}+C\xi^2-3}}=\sqrt{\frac{3}{-8\xi_{01}^{5/3}+6C\xi_{01}}}\in[0,\infty),
$$
we get
$$
\lim_{\xi\searrow\xi_{01}}\int_{\xi_{00}}^{\xi}\frac{1}{\sqrt{-\tau^{8/3}+C\tau^2-3}}\ d\tau>-\infty.
$$
Therefore,
$$
\lim_{\xi\searrow \xi_{01}} \rho_0(\xi)=\rho_{0,1}\in\mathbb{R}^\ast_{+}.
$$
Similarly, we can see that $\rho_0(\xi)$ is negative for any $\xi\in \left(\xi_{00},\xi_{02}\right)$ and
$$
\rho_0(\xi)> -\frac{\sqrt{3}}{\xi_{00}}\int_{\xi_{00}}^{\xi}\frac{1}{\sqrt{-\tau^{8/3}+C\tau^2-3}}\ d\tau, \qquad \xi\in \left(\xi_{00},\xi_{02}\right).
$$
Thus, the limit of $\rho_0$ when $\xi$ approaches $\xi_{02}$ if finite, since
$$
\lim_{\xi\nearrow\xi_{02}} \sqrt{\xi_{02}-\xi}\cdot\frac{1}{\sqrt{-\xi^{8/3}+C\xi^2-3}}=\sqrt{\frac{3}{8\xi_{02}^{5/3}-6C\xi_{02}}}\in[0,\infty),
$$
and so
$$
\lim_{\xi\nearrow\xi_{02}}\int_{\xi_{00}}^{\xi}\frac{1}{\sqrt{-\tau^{8/3}+C\tau^2-3}}\ d\tau<\infty.
$$
In conclusion,
$$
\lim_{\xi\nearrow \xi_{02}} \rho_0(\xi)=\rho_{0,-1}\in\mathbb{R}^\ast_{-}.
$$
\end{proof}

With the above change of coordinates, the metric $g_{\epsilon,C}$ can be rewritten as
$$
g_{\epsilon,C}(\rho,\theta)=\frac{1}{\xi_{0}^2(\rho)}d\theta^2+d\rho^2,
$$
where
\begin{itemize}
  \item $(\rho,\theta)\in\left(\rho_{0,-1},\infty\right)\times\mathbb{R}$, with $\rho_{0,-1}<0$, if $\epsilon=-1$;
  \item $(\rho,\theta)\in\left(\rho_{0,-1},\infty\right)\times\mathbb{R}$, with $\rho_{0,-1}<0$, if $\epsilon=0$;
  \item $(\rho,\theta)\in\left(\rho_{0,-1},\rho_{0,1}\right)\times\mathbb{R}$, with $\rho_{0,-1}<0$ and $\rho_{0,1}>0$, if $\epsilon=1$;
\end{itemize}

\begin{remark}
We note that, when $\epsilon=1$, we have
$$
\lim_{\rho\searrow\rho_{0,-1}}\left|\frac{\partial}{\partial \theta}\right|^2=\frac{1}{\xi_{02}^2}\in\mathbb{R}^\ast_{+} \qquad \text{and} \qquad \lim_{\rho\nearrow\rho_{0,1}}\left|\frac{\partial}{\partial \theta}\right|^2=\frac{1}{\xi_{01}^2}\in\mathbb{R}^\ast_{+},
$$
and thus, the metric $g_{1,C}$ can be smoothly extended to the boundary $\rho=\rho_{0,-1}$ and $\rho=\rho_{0,1}$.
\end{remark}

In \cite{NO19-H} we constructed a family of complete metrics $\tilde{g}_{\epsilon,C}$, when $\epsilon=-1$. In that case, it was enough to glue two metrics $g_{-1,C}$ (the same real constant $C$), along the boundary, in order to obtain a complete surface $\left(\mathbb{R}^2,\tilde{g}_{-1,C}\right)$. Then, we prove that from $\left(\mathbb{R}^2,\tilde{g}_{-1,C}\right)$ there exists a unique biconservative immersion in $\mathbb{H}^3$. The same steps can be also taken when $c=0$. More precisely, if we glue two metrics $g_{0,C}$ (the same positive real constant $C$), one gets a complete surface $\left(\mathbb{R}^2,\tilde{g}_{0,C}\right)$; then, the existence and the uniqueness of a biconservative immersion from $\left(\mathbb{R}^2,\tilde{g}_{0,C}\right)$ in $\mathbb{R}^3$ can be proved using the Fundamental Theorem of Surfaces in $\mathbb{R}^3$.

\begin{remark}
In fact, for $\epsilon=0$ we could reobtain Theorem 4.1 from \cite{N16} (where the complete abstract surface was obtained by working with isothermal coordinates), and for $\epsilon=1$ we could reobtain Proposition 4.17  from \cite{N16} (where the idea was to notice that the abstract standard biconservative surface is isometric to a surface of revolution in $\mathbb{R}^3$).
\end{remark}

Further, we will focus on the case $\epsilon=1$. In order to construct a complete metric $\tilde{g}_{1,C}$, $C>4/\sqrt{3}$, we can follow the same ideas as in \cite{NO19-H}, but in this case the gluing process has to be performed for infinitely many times.

First we extend the surface $\left(\left(\rho_{0,-1},\rho_{0,1}\right)\times\mathbb{R}, g_{1,C}\right)$ by ``symmetry'' with respect to its boundary given by $\rho=\rho_{0,1}$ and $\rho=\rho_{0,-1}$, and then, continue this process for infinitely many times, in order to obtain a complete surface.

More precisely, we extend the surface $\left(\left(\rho_{0,-1},\rho_{0,1}\right)\times\mathbb{R},g_{1,C}\right)$ to the ``right hand side'', i.e.,  we impose the line $\rho=\rho_{0,1}$ to be an axis of symmetry. Therefore, we have $2\rho_{0,1}=\rho_0(\xi)+\rho_1(\xi)$, or, equivalently,  $\rho_1(\xi)=2\rho_{0,1}-\rho_0(\xi)$, where $\rho_1:\left(\xi_{01},\xi_{02}\right)\to\mathbb{R}$. It is easy to see that
$$
\lim_{\xi\searrow \xi_{01}} \rho_1(\xi)=\rho_{0,1}, \qquad \lim_{\xi\nearrow \xi_{02}}\rho_1(\xi) =2\rho_{0,1}-\rho_{0,-1},
$$
and, since $\rho_1'(\xi)=-\rho_0'(\xi)>0$, for any $\xi\in(\xi_{01},\xi_{02})$, it follows that $\rho_1$ is strictly increasing and the image $\rho_1\left(\xi_{01},\xi_{02}\right)$ is $\left(\rho_{0,1},\rho_{0,2}\right)$, where $\rho_{0,2}=2\rho_{0,1}-\rho_{0,-1}$.

Since $\rho_1$ is a diffeomorphism on its image, we can consider $\rho^{-1}_1:\left(\rho_{0,1},\rho_{0,2}\right) \to \left(\xi_{01},\xi_{02}\right)$, with $\rho^{-1}_1$ $:$ $\xi_1=\xi_1(\rho)$, $ \rho\in \left(\rho_{0,1},\rho_{0,2}\right)$.

Clearly,
$$
\lim_{\rho \searrow \rho_{0,1}}\xi_1(\rho)=\xi_{01}, \quad \lim_{\rho\nearrow {\rho_{0,2}}} \xi_1(\rho)=\xi_{02}.
$$
We also note that the new surface
$$
\left(\left(\rho_{0,1},\rho_{0,2}\right)\times\mathbb{R},\frac{1}{\xi_{1}^2(\rho)}d\theta^2+d\rho^2\right)
$$
is isometric to the initial surface
$$
\left(\left(\rho_{0,-1},\rho_{0,1}\right)\times\mathbb{R},\frac{1}{\xi_{0}^2(\rho)}d\theta^2+d\rho^2\right),
$$
and, therefore, it is also isometric to the abstract standard biconservative surface.

In order to extend our surface to the left hand side, we impose the line $\rho=\rho_{0,-1}$ to be an axis of symmetry. Therefore, we have $2\rho_{0,-1}=\rho_0(\xi)+\rho_{-1}(\xi)$, or, equivalently, $\rho_{-1}(\xi)=2\rho_{0,-1}-\rho_0(\xi)$, where $\rho_{-1}:\left(\xi_{01},\xi_{02}\right)\to\mathbb{R}$. It is easy to see that
$$
\lim_{\xi\searrow \xi_{01}}\rho_{-1}(\xi) =2\rho_{0,-1}-\rho_{0,1}, \qquad \lim_{\xi\nearrow \xi_{02}} \rho_{-1}(\xi)=\rho_{0,-1},
$$
and, since $\rho_{-1}'(\xi)=-\rho_0'(\xi)>0$, for any $\xi\in(\xi_{01},\xi_{02})$, it follows that $\rho_{-1}$ is strictly increasing and $\rho_{-1}\left(\xi_{01},\xi_{02}\right)=\left(\rho_{0,-2}, \rho_{0,-1}\right)$, where $\rho_{0,-2}=2\rho_{0,-1}-\rho_{0,1}$.

Since $\rho_{-1}$ is a diffeomorphism on its image, we can consider $\rho^{-1}_{-1}:\left(\rho_{0,-2}, \rho_{0,-1}\right) \to \left(\xi_{01},\xi_{02}\right)$, with $\rho^{-1}_{-1}$ $:$ $\xi_{-1}=\xi_{-1}(\rho)$, $ \rho\in \left(\rho_{0,-2}, \rho_{0,-1}\right)$.

Clearly,
$$
\lim_{\rho\searrow \rho_{0,-2}}\xi_{-1}(\rho)=\xi_{01}, \qquad \lim_{\rho\nearrow {\rho_{0,-1}}} \xi_{-1}(\rho)=\xi_{02}
$$
and we get a new surface isometric to the initial one.

We will continue this process, extending by symmetry with respect to the line $\rho=\rho_{0,r}$, for any $r\in \mathbb{Z}^\ast$. We define $\rho_{0,2}=2\rho_{0,1}-\rho_{0,-1}$; $\rho_{0,3}=2\rho_{0,2}-\rho_{0,1}=3\rho_{0,1}-2\rho_{0,-1}$, etc.; then $\rho_{0,-2}=2\rho_{0,-1}-\rho_{0,1}$; $\rho_{0,-3}=2\rho_{0,-2}-\rho_{0,-1}=3\rho_{0,-1}-2\rho_{0,1}$, etc.. Now, we can generalize these formulas and define $\rho=\rho_{0,r}$, $r\in \mathbb{Z}^\ast$, by
\begin{equation*}
\rho_{0,r}=\left\{
\begin{array}{ll}
  r\rho_{0,1}-(r-1)\rho_{0,-1}, & r\geq 1\\\\
  (r+1)\rho_{0,1}-r\rho_{0,-1}, & r\leq -1
\end{array}
\right..
\end{equation*}
Performing this process, we also obtain the functions $\rho_r$. For example, $\rho_1(\xi)=2\rho_{0,1}-\rho_0(\xi)$; $\rho_2(\xi)=2\rho_{0,2}-\rho_1(\xi)=2\rho_{0,1}-2\rho_{0,-1}+\rho_0(\xi)$, etc.; then $\rho_{-1}(\xi)=2\rho_{0,-1}-\rho_0(\xi)$; $\rho_{-2}(\xi)=2\rho_{0,-2}-\rho_{-1}(\xi)=2\rho_{0,-1}-2\rho_{0,1}+\rho_0(\xi)$, etc.. Generalizing, one obtains
\begin{equation*}
\rho_{r}(\xi)=\left\{
\begin{array}{ll}
  r\left(\rho_{0,1}-\rho_{0,-1}\right)+\rho_0(\xi),& r=2p\\\\
  (r+1)\rho_{0,1}-(r-1)\rho_{0,-1}-\rho_0(\xi), & r=2p+1
\end{array}
\right..
\end{equation*}
Since $\rho_s$ is a diffeomorphism on its image, we can consider $\xi_s=\xi_s(\rho)$ its inverse function, for any $s\in\mathbb{Z}$, and we note that
\begin{equation*}
\begin{array}{l}
\lim_{\rho\searrow \rho_{0,2p}}\xi_{2p}(\rho)=\xi_{02}, \qquad \lim_{\rho\searrow \rho_{0,2p-1}}\xi_{2p-1}(\rho)=\xi_{01}, \qquad p\geq 1,
\\\\
\lim_{\rho\nearrow \rho_{0,2p}}\xi_{2p}(\rho)=\xi_{01}, \qquad \lim_{\rho\nearrow \rho_{0,2p+1}}\xi_{2p+1}(\rho)=\xi_{02}, \qquad p\leq -1.
\end{array}
\end{equation*}
It is easy to see that
\begin{equation*}
\rho_{2s+1}'(\xi)>0, \qquad \rho_{2s}'(\xi)<0, \qquad  s\in\mathbb{Z},
\end{equation*}
and this is equivalent to
\begin{equation}\label{ec:DerivXiRho}
\xi_{2s+1}'(\rho)>0, \qquad \xi_{2s}'(\rho)<0, \qquad  s\in\mathbb{Z}.
\end{equation}
Now, we glue all functions $\xi_{s}$ and obtain the function $F:\mathbb{R}\to\left[\xi_{01},\xi_{02}\right]$ defined by
\begin{equation}\label{ec:gammma}
F(\rho)=\left\{
\begin{array}{ll}
  \xi_{02}, & \rho=\rho_{0,r}, \quad r=2p\in\mathbb{Z}_{+}, \quad p\geq 1\\\\
  \xi_{01}, & \rho=\rho_{0,r},  \quad r=2p-1\in\mathbb{Z}_{+}, \quad p\geq 1\\\\
  \xi_{r}(\rho), & \rho\in\left(\rho_{0,r},\rho_{0,r+1}\right), \quad r\in\mathbb{Z}_{+},\quad r\geq 1\\\\
  \xi_{0}(\rho), & \rho\in\left(\rho_{0,-1},\rho_{0,1}\right)  \\\\
  \xi_{r}(\rho), & \rho\in\left(\rho_{0,r-1},\rho_{0,r}\right),\quad r\in\mathbb{Z}_{-},\quad r\leq -1\\\\
  \xi_{01}, & \rho=\rho_{0,r},\quad  r=2p\in\mathbb{Z}_{-}, \quad p\leq -1\\\\
  \xi_{02}, & \rho=\rho_{0,r},\quad r=2p+1\in\mathbb{Z}_{-},\quad p\leq -1
\end{array}
\right..
\end{equation}
By some standard computations it is possible to verify that the non-vanishing function $F$ is at least of class $C^3$.

We also note that the function $F$ is periodic and the principal period is $2\left(\rho_{0,1}-\rho_{0,-1}\right)$. These properties of $F$ are illustrated in Figure \ref{Fig}, where we chose $C=3$.

\begin{figure}
  \centering
  \includegraphics[width=0.5\textwidth]{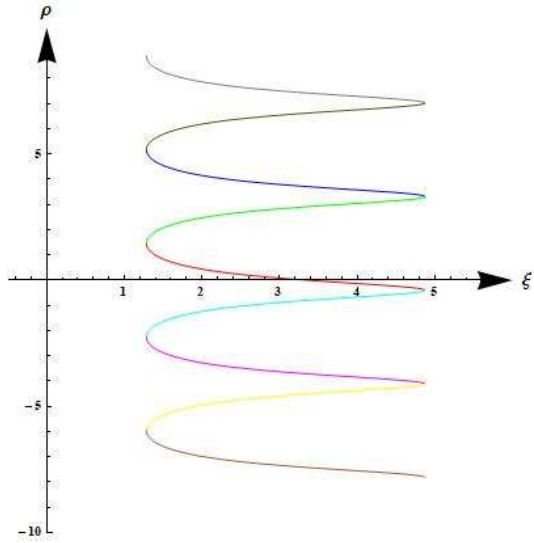}
  \caption{The graph of $F$}\label{Fig}
\end{figure}

In order to write in a simpler way the metric, we consider the function
$$
\Gamma:\mathbb{R}\to\left[\frac{1}{\xi_{02}},\frac{1}{\xi_{01}}\right],\qquad \Gamma(\rho)=\frac{1}{F(\rho)},
$$
which has the same properties as $F$.

In conclusion, we can state the next theorem.

\begin{theorem}\label{th-metricComplete}
The abstract surface
$$
\left(\mathbb{R}^2,\tilde{g}_{1,C}(\rho,\theta)=\Gamma^2(\rho)d\theta^2+d\rho^2\right)
$$
is complete.
\end{theorem}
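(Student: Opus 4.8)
The plan is to exploit the fact that, in the case $\epsilon=1$, both $\xi_{01}$ and $\xi_{02}$ are positive and finite, so that the coefficient function $\Gamma$ is trapped between two strictly positive constants. Indeed, by construction $\Gamma:\mathbb{R}\to\left[1/\xi_{02},1/\xi_{01}\right]$ is continuous (in fact at least $C^3$) and periodic, whence
$$
0<\frac{1}{\xi_{02}}\leq \Gamma(\rho)\leq \frac{1}{\xi_{01}}<\infty,\qquad \rho\in\mathbb{R}.
$$
The whole argument rests on this two-sided bound, which is exactly where the hypothesis $\epsilon=1$ (hence $\xi_{01}>0$) enters; for $\epsilon\in\{-1,0\}$ one has $\xi_{01}=0$, so $\Gamma$ is unbounded, which is the reason only two copies were glued in those cases.

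First I would compare $\tilde{g}_{1,C}$ with the flat metric $g_0=d\rho^2+d\theta^2$ on $\mathbb{R}^2$. For a tangent vector $w=a\,\partial_\rho+b\,\partial_\theta$ one has $\tilde{g}_{1,C}(w,w)=a^2+\Gamma^2(\rho)\,b^2$, so with $c_1=\min\{1,1/\xi_{02}\}$ and $c_2=\max\{1,1/\xi_{01}\}$ the bound above gives
$$
c_1^2\,g_0(w,w)\leq \tilde{g}_{1,C}(w,w)\leq c_2^2\,g_0(w,w),
$$
uniformly in $\rho$. Consequently, for every piecewise-smooth curve $\gamma$ the lengths satisfy $c_1\,\ell_{0}(\gamma)\leq \ell_{\tilde{g}_{1,C}}(\gamma)\leq c_2\,\ell_{0}(\gamma)$, and passing to the infimum over curves joining two points $p,q\in\mathbb{R}^2$ yields
$$
c_1\,d_0(p,q)\leq \tilde{d}(p,q)\leq c_2\,d_0(p,q),
$$
where $d_0$ and $\tilde{d}$ denote the Riemannian distance functions of $g_0$ and $\tilde{g}_{1,C}$, respectively.

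From this distance comparison the conclusion is immediate. A sequence is $\tilde{d}$-Cauchy if and only if it is $d_0$-Cauchy, and it $\tilde{d}$-converges if and only if it $d_0$-converges; since $\left(\mathbb{R}^2,d_0\right)$ is a complete metric space, so is $\left(\mathbb{R}^2,\tilde{d}\right)$. By the Hopf--Rinow theorem, metric completeness of $\left(\mathbb{R}^2,\tilde{d}\right)$ is equivalent to geodesic completeness of $\left(\mathbb{R}^2,\tilde{g}_{1,C}\right)$, which is what we want. Equivalently, one may invoke the criterion that a Riemannian manifold is complete if and only if every divergent path has infinite length: any divergent path has infinite $g_0$-length because $g_0$ is complete, and hence infinite $\tilde{g}_{1,C}$-length by the lower bound $\ell_{\tilde{g}_{1,C}}\geq c_1\,\ell_0$.

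I do not expect any serious obstacle beyond securing the uniform positive lower bound on $\Gamma$. The only points that require care are that $\Gamma$ be globally defined, continuous and periodic, so that its range is the genuinely compact interval $\left[1/\xi_{02},1/\xi_{01}\right]$ and the comparison constants $c_1,c_2$ are uniform in $\rho$; but this is precisely the content of the construction of $F$, and hence of $\Gamma=1/F$, carried out just before the statement.
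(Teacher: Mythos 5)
Your proof is correct and takes essentially the same approach as the paper: both arguments rest on the uniform lower bound $\Gamma(\rho)\geq 1/\xi_{02}>0$ and a comparison with a (scaled) flat complete metric on $\mathbb{R}^2$, the paper citing Gordon's completeness criterion for the domination step that you instead prove by hand via lengths and distances. The only cosmetic difference is that your upper bound $\Gamma\leq 1/\xi_{01}$ and the resulting bi-Lipschitz equivalence are superfluous — as your own closing paragraph observes, the lower bound alone suffices, and that one-sided version is exactly the paper's proof.
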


\begin{proof}
In order to prove that the metric $\tilde{g}_{1,C}$ is complete, first we note that $\Gamma(\rho)\geq 1/\xi_{02}$, for any $\rho\in\mathbb{R}$, and then consider the metric
$$
\tilde{g}^0(\rho,\theta)=m_0\left(d\theta^2+d\rho^2\right), \qquad (\rho,\theta)\in\mathbb{R}^2,
$$
where $m_0$ is the minimum between $1/\xi_{02}^2$ and $1$. As the metric $\tilde{g}^0$ is complete and $\tilde{g}_{1,C}-\tilde{g}^0$ is non-negative at any point of the surface, it follows that $\tilde{g}_{1,C}$ is complete (see \cite{G73}).
\end{proof}

\begin{remark}\label{rk:geodesic}
Since $\left(\grad \tilde{K}_{1,C}\right)\left(\rho_{0,r},\theta\right)=0$, for any $\theta\in\mathbb{R}$, where $\tilde{K}_{1,C}$ is the Gaussian curvature of $\left(\mathbb{R}^2,\tilde{g}_{1,C}\right)$, it follows, from Theorem \ref{thm:carac} (ii), that the lines $\rho=\rho_{0,r}$ are geodesics in $\left(\mathbb{R}^2,\tilde{g}_{1,C}\right)$, for any $r\in\mathbb{Z}^\ast$.
\end{remark}

\begin{remark}
Since the Gaussian curvature of the complete surface $\left(\mathbb{R}^2,\tilde{g}_{1,C}\right)$ satisfies $\left(\grad \tilde{K}_{1,C}\right)\left(\rho_{0,r},\theta\right)=0$, for any $\theta\in\mathbb{R}$, one cannot simply apply Theorem \ref{thm:char} and therefore, the existence of a (non-$CMC$) biconservative immersion from $\left(\mathbb{R}^2,\tilde{g}_{1,C}\right)$ in $\mathbb{S}^3$ is not guaranteed. So, our aim is to overcome this difficulty and construct such an immersion.
\end{remark}

In the following construction of a globally defined orthonormal frame field on $\left(\mathbb{R}^2,\tilde{g}_{1,C}\right)$, for the sake of simplicity, we will omit to write the indices $1$ and $C$. So, let us denote $h_s(\rho)=1/\xi_s(\rho)$, for any $s\in\mathbb{Z}$, and consider
$$
^0g(\rho,\theta)=h_0^2(\rho)d\theta^2+d\rho^2,
$$
for any $(\rho,\theta)\in \left(\left(\rho_{0,-1},\rho_{0,1}\right)\times\mathbb{R}\right)$,
$$
^rg(\rho,\theta)=h_r^2(\rho)d\theta^2+d\rho^2,
$$
for any $(\rho,\theta)\in \left(\left(\rho_{0,r},\rho_{0,r+1}\right)\times\mathbb{R}\right)$, with $r\geq 1$, and
$$
^rg(\rho,\theta)=h_r^2(\rho)d\theta^2+d\rho^2,
$$
for any $(\rho,\theta)\in \left(\left(\rho_{0,r-1},\rho_{0,r}\right)\times\mathbb{R}\right)$, with $r\leq -1$.
It is easy to see that the Gaussian curvatures of the above surfaces are given by
$$
^sK(\rho)=-\frac{h_s''(\rho)}{h_s(\rho)}, \qquad s\in\mathbb{Z},
$$
and their derivatives are equal to
$$
^sK'(\rho)=\frac{-h_s'''(\rho)h_s(\rho)+h_s''(\rho)h_s'(\rho)}{h_s^2(\rho)}.
$$
We note that since $^sK(\rho)=K\left(\xi_s(\rho)\right)$, we have $^sK'(\rho)=K'\left(\xi_s(\rho)\right)\xi_s'(\rho)$ and then, using \eqref{eq:K-DC} and \eqref{ec:DerivXiRho}, it follows that $^sK'(\rho)<0$, if $s$ is odd, and $^sK'(\rho)>0$, if $s$ is even.

Now, let us consider the vector fields
$$
^sX_1=\frac{\grad \ ^sK}{\left|\grad \ ^sK\right|}, \qquad s\in\mathbb{Z},
$$
defined on $\left(\left(\rho_{0,-1},\rho_{0,1}\right)\times\mathbb{R}\right)$,  $\left(\left(\rho_{0,s},\rho_{0,s+1}\right)\times\mathbb{R}\right)$ or $\left(\left(\rho_{0,s-1},\rho_{0,s}\right)\times\mathbb{R}\right)$ if $s=0$, $s\geq 1$ or $s\leq -1$, respectively. It is easy to see that
$$
^sX_1=\frac{-h_s'''(\rho)h_s(\rho)+h_s''(\rho)h_s'(\rho)}{\left|-h_s'''(\rho)h_s(\rho)+h_s''(\rho)h_s'(\rho)\right|}\frac{\partial}{\partial \rho}
$$
and, since the sign of $^sK'(\rho)$ is given by the sign of the expression $-h_s'''(\rho)h_s(\rho)+h_s''(\rho)h_s'(\rho)$, one gets
$$
^sX_1=(-1)^s \frac{\partial}{\partial \rho}.
$$
Now, let us define the following vector field on $\mathbb{R}^2$
\begin{equation*}
X_1(\rho,\theta)=\left\{
\begin{array}{ll}
  ^rX_1(\rho,\theta), & (\rho,\theta)\in\left(\rho_{0,r},\rho_{0,r+1}\right)\times\mathbb{R}, \quad r=2p\in\mathbb{Z}_{+}, \quad p\geq 1\\\\
  -^rX_1(\rho,\theta), & (\rho,\theta)\in\left(\rho_{0,r},\rho_{0,r+1}\right)\times\mathbb{R},  \quad r=2p-1\in\mathbb{Z}_{+}, \quad p\geq 1\\\\
  ^0X_1(\rho,\theta), & (\rho,\theta)\in\left(\rho_{0,-1},\rho_{0,1}\right)\times\mathbb{R}\\\\
  ^rX_1(\rho,\theta), & (\rho,\theta)\in\left(\rho_{0,r-1},\rho_{0,r}\right)\times\mathbb{R},\quad r=2p\in\mathbb{Z}_{-}, \quad p\leq -1\\\\
 -^rX_1(\rho,\theta), & (\rho,\theta)\in\left(\rho_{0,r-1},\rho_{0,r}\right)\times\mathbb{R},\quad r=2p+1\in\mathbb{Z}_{-}, \quad p\leq -1\\\\
  \frac{\partial}{\partial \rho}, & (\rho,\theta)\in\{\rho_{0,r}\}\times\mathbb{R}, \quad r\in\mathbb{Z}^\ast
\end{array}
\right..
\end{equation*}
In fact, the vector field $X_1$ is given by $X_1=\frac{\partial}{\partial \rho}$ on $\mathbb{R}^2$.

Now, the vector field $X_1$ determines uniquely a global vector field $X_2$ by asking $\left\{X_1(\rho,\theta),X_2(\rho,\theta)\right\}$ to be a positive orthonormal frame field in $\left(\mathbb{R}^2,\tilde{g}\right)$, for any $(\rho,\theta)\in \mathbb{R}^2$. Obviously,
\begin{equation*}
X_2(\rho,\theta)=\left\{
\begin{array}{ll}
  ^rX_2(\rho,\theta), & (\rho,\theta)\in\left(\rho_{0,r},\rho_{0,r+1}\right)\times\mathbb{R}, \quad r=2p\in\mathbb{Z}_{+}, \quad p\geq 1\\\\
  -^rX_2(\rho,\theta), & (\rho,\theta)\in\left(\rho_{0,r},\rho_{0,r+1}\right)\times\mathbb{R},  \quad r=2p-1\in\mathbb{Z}_{+}, \quad p\geq 1\\\\
  ^0X_2(\rho,\theta), & (\rho,\theta)\in\left(\rho_{0,-1},\rho_{0,1}\right)\times\mathbb{R}\\\\
  ^rX_2(\rho,\theta), & (\rho,\theta)\in\left(\rho_{0,r-1},\rho_{0,r}\right)\times\mathbb{R},\quad r=2p\in\mathbb{Z}_{-}, \quad p\leq -1\\\\
 -^rX_2(\rho,\theta), & (\rho,\theta)\in\left(\rho_{0,r-1},\rho_{0,r}\right)\times\mathbb{R},\quad r=2p+1\in\mathbb{Z}_{-}, \quad p\leq -1\\\\
  \frac{1}{\Gamma\left(\rho_{0,r}\right)}\frac{\partial}{\partial \theta}, & (\rho,\theta)\in\{\rho_{0,r}\}\times\mathbb{R}, \quad r\in\mathbb{Z}^\ast
\end{array}
\right.,
\end{equation*}
that is $X_2$ is given by $X_2=\frac{1}{\Gamma(\rho)}\frac{\partial}{\partial \theta}$ on $\mathbb{R}^2$, where $1/\Gamma=F$ is given in \eqref{ec:gammma}.

In the following, we give some properties of $X_1$ and $X_2$.

\begin{proposition}\label{prop-1}
Let $\left(\mathbb{R}^2,\tilde{g}_{1,C}\right)$ the above complete surface. Then, the Gaussian curvature $\tilde{K}_{1,C}$ of $\left(\mathbb{R}^2,\tilde{g}_{1,C}\right)$ satisfies $1- \tilde{K}_{1,C}>0$ at any point, and the vector fields $X_1$ and $X_2$ defined above, satisfy on $\mathbb{R}^2$
\begin{equation}\label{eq: LV-connec}
\nabla_{X_1}{X_1}=\nabla_{X_1}{X_2}=0,\quad \nabla_{X_2}{X_2}=-\frac{3X_1\tilde{K}_{1,C}}{8\left(1-\tilde{K}_{1,C}\right)}X_1, \quad \nabla_{X_2}{X_1}=\frac{3X_1\tilde{K}_{1,C}}{8\left(1-\tilde{K}_{1,C}\right)}X_2.
\end{equation}
\end{proposition}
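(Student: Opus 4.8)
The plan is to treat the whole surface at once, exploiting the fact established just above that the global orthonormal frame is simply $X_1=\partial/\partial\rho$ and $X_2=(1/\Gamma)\,\partial/\partial\theta$, where $\Gamma=1/F$ is nowhere vanishing and at least of class $C^3$. Because of this regularity, $\tilde{g}_{1,C}$ is a $C^3$ metric on all of $\mathbb{R}^2$, its Levi-Civita connection is defined and $C^2$ across the gluing lines $\rho=\rho_{0,r}$, and the frame $\{X_1,X_2\}$ is smooth there. So I can compute everything in the single coordinate chart $(\rho,\theta)$ and never have to patch separately on the lines $\rho=\rho_{0,r}$.

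First I would record the Gaussian curvature. A metric of the form $d\rho^2+\Gamma^2(\rho)\,d\theta^2$ has $\tilde{K}_{1,C}=-\Gamma''/\Gamma$. On each open strip the surface is isometric to the abstract standard biconservative surface $\left(D_{1,C},g_{1,C}\right)$, whose curvature is $-\xi^{8/3}/9+1$; since there $\Gamma=1/\xi_s=1/F$, this gives $\tilde{K}_{1,C}=-F^{8/3}/9+1$. By continuity of $\tilde{K}_{1,C}$ (ensured by the $C^3$-regularity of $F$) the identity extends to the lines $\rho=\rho_{0,r}$, hence holds on all of $\mathbb{R}^2$. Therefore
\[
1-\tilde{K}_{1,C}=\frac{1}{9}F^{8/3}>0,
\]
since $F\geq\xi_{01}>0$, which proves the first assertion.

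Next I would compute the connection. With $g_{\rho\rho}=1$, $g_{\theta\theta}=\Gamma^2$, $g_{\rho\theta}=0$ and $\Gamma=\Gamma(\rho)$, the only nontrivial covariant derivatives of the coordinate fields are
\[
\nabla_{\frac{\partial}{\partial\theta}}\frac{\partial}{\partial\theta}=-\Gamma\Gamma'\,\frac{\partial}{\partial\rho},\qquad \nabla_{\frac{\partial}{\partial\rho}}\frac{\partial}{\partial\theta}=\nabla_{\frac{\partial}{\partial\theta}}\frac{\partial}{\partial\rho}=\frac{\Gamma'}{\Gamma}\,\frac{\partial}{\partial\theta}.
\]
Feeding these into $X_1=\partial/\partial\rho$, $X_2=(1/\Gamma)\,\partial/\partial\theta$ yields at once
\[
\nabla_{X_1}X_1=\nabla_{X_1}X_2=0,\qquad \nabla_{X_2}X_2=-\frac{\Gamma'}{\Gamma}X_1,\qquad \nabla_{X_2}X_1=\frac{\Gamma'}{\Gamma}X_2.
\]
It then remains only to identify the coefficient. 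Using $X_1\tilde{K}_{1,C}=\tilde{K}_{1,C}'=-\frac{8}{27}F^{5/3}F'$ and $1-\tilde{K}_{1,C}=F^{8/3}/9$ from the previous step, one checks that
\[
\frac{3\,X_1\tilde{K}_{1,C}}{8\left(1-\tilde{K}_{1,C}\right)}=-\frac{F'}{F}=\frac{\Gamma'}{\Gamma},
\]
the last equality since $\Gamma=1/F$. Substituting this into the formulas above gives exactly \eqref{eq: LV-connec}.

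As a consistency check one may argue strip by strip: the abstract standard biconservative surface satisfies the hypotheses of Theorem \ref{thm:carac}, so part (iii) there yields the same four relations for each local frame $\{{}^{s}X_1,{}^{s}X_2\}$; since these relations are invariant under the simultaneous sign change $(X_1,X_2)\mapsto(-X_1,-X_2)$, they descend to the global frame on every strip. The delicate point is the passage across the lines $\rho=\rho_{0,r}$, where $\grad\tilde{K}_{1,C}=0$ and the pointwise normalization $\grad\tilde{K}_{1,C}/\left|\grad\tilde{K}_{1,C}\right|$ used to define the local frames breaks down. This is exactly why I would not build the argument on that normalization but rather on the globally smooth frame $\{\partial/\partial\rho,(1/\Gamma)\,\partial/\partial\theta\}$ together with the $C^3$-regularity of $F$, which makes the connection formulas valid uniformly, including on the gluing lines.
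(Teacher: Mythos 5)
Your proof is correct, and it follows a genuinely different route from the paper's. The paper establishes $1-\tilde{K}_{1,C}>0$ by listing the piecewise values of $\tilde{K}_{1,C}$ on the open strips and at the gluing lines $\rho=\rho_{0,r}$, and for the identities \eqref{eq: LV-connec} it argues in two steps: on the open dense set $\left(\mathbb{R}\setminus\left\{\rho_{0,r}\ :\ r\in\mathbb{Z}^\ast\right\}\right)\times\mathbb{R}$, where each strip is isometric to an abstract standard biconservative surface so that Theorem \ref{thm:carac} (iii) applies (up to the alternating signs built into the definitions of $X_1$ and $X_2$), and then by passing to the limit at the gluing lines, which is legitimate because the $C^3$ regularity of $F$ makes both sides of \eqref{eq: LV-connec} continuous there. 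You instead perform one global computation in the chart $(\rho,\theta)$: the warped-product Christoffel symbols yield the four identities with coefficient $\Gamma'/\Gamma$, and the relation $1-\tilde{K}_{1,C}=F^{8/3}/9$ (obtained on the strips and extended by continuity) converts $\Gamma'/\Gamma=-F'/F$ into $\frac{3X_1\tilde{K}_{1,C}}{8\left(1-\tilde{K}_{1,C}\right)}$; this also gives the curvature statement at once, since $F\geq\xi_{01}>0$. Both arguments rest on the same regularity assertion for $F$ (stated without proof in the paper), but yours buys uniform validity of the formulas on all of $\mathbb{R}^2$ with no limiting argument and no sign bookkeeping — indeed your observation that the relations in Theorem \ref{thm:carac} (iii) are invariant under $(X_1,X_2)\mapsto(-X_1,-X_2)$ is precisely what makes the paper's strip-by-strip version consistent — while the paper's version avoids any Christoffel computation by reusing the already established characterization theorem.
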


\begin{proof}
The Gaussian curvature $\tilde{K}_{1,C}$ of $\left(\mathbb{R}^2,\tilde{g}_{1,C}\right)$ satisfies
\begin{equation*}
\tilde{K}_{1,C}(\rho,\theta)=\tilde{K}_{1,C}(\rho)=\left\{
\begin{array}{ll}
  ^rK(\rho), & (\rho,\theta)\in\left(\rho_{0,r},\rho_{0,r+1}\right)\times\mathbb{R},\qquad r\geq 1 \\\\
  ^0K(\rho), & (\rho,\theta)\in\left(\rho_{0,-1},\rho_{0,1}\right)\times\mathbb{R}\\\\
  ^rK(\rho), & (\rho,\theta)\in\left(\rho_{0,r-1},\rho_{0,r}\right)\times\mathbb{R},\qquad r\leq -1 \\\\
\end{array}
\right.,
\end{equation*}
and
\begin{equation*}
\tilde{K}_{1,C}\left(\rho_{0,r}\right)=\left\{
\begin{array}{ll}
  \lim_{\rho\searrow\rho_{0,r}}\ ^rK(\rho)=\lim_{\rho\nearrow\rho_{0,r}}\ ^{r-1}K(\rho), & r\geq 1 \\\\
  \lim_{\rho\nearrow\rho_{0,r}}\ ^rK(\rho)=\lim_{\rho\searrow\rho_{0,r}}\ ^{r+1}K(\rho), & r\leq -1
\end{array}
\right.,
\end{equation*}
or, more precisely,
\begin{equation*}
\tilde{K}_{1,C}\left(\rho_{0,r}\right)=\left\{
\begin{array}{ll}
  -\frac{1}{9}\xi^{8/3}_{02}+1, & r=2p, \quad p\geq 1 \\\\
  -\frac{1}{9}\xi^{8/3}_{01}+1, & r=2p-1, \quad p\geq 1 \\\\
  -\frac{1}{9}\xi^{8/3}_{01}+1, & r=2p, \quad p\leq -1 \\\\
  -\frac{1}{9}\xi^{8/3}_{02}+1, & r=2p+1, \quad p\leq -1
\end{array}
\right..
\end{equation*}
Thus $1-\tilde{K}_{1,C}>0$ everywhere.

In order to prove \eqref{eq: LV-connec}, we first work on $\left(\mathbb{R}\setminus\left\{\rho_{0,r}\ :\  r\in\mathbb{Z}^\ast\right\}\right)\times\mathbb{R}$ and then, we pass to the limit.
\end{proof}

Now, we can state the following existence and uniqueness result that will play an important role in the next section due to its uniqueness part.

\begin{theorem}\label{th:EUS3}
Let $\left(\mathbb{R}^2,\tilde{g}_{1,C}\right)$ the above complete surface, $C>4/\sqrt{3}$. Then, there exists a unique biconservative immersion $\Phi_{1,C}:\left(\mathbb{R}^2,\tilde{g}_{1,C}\right)\to\mathbb{S}^3$. Moreover, $\grad f_{1,C}\neq 0$ at any point of $\left(\mathbb{R}\setminus\left\{\rho_{0,r}\ :\  r\in\mathbb{Z}^\ast\right\}\right)\times\mathbb{R}$, where $f_{1,C}$ is the mean curvature function of the immersion $\Phi_{1,C}$.
\end{theorem}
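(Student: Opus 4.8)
The plan is to produce $\Phi_{1,C}$ through the Fundamental Theorem of Surfaces in $\mathbb{S}^3$, and to deduce uniqueness by pinning down the shape operator strip by strip. First I would write a candidate shape operator. Recall from Theorem \ref{thm:CMOP} that a biconservative surface with $\grad f\neq 0$ satisfies $f^2=\tfrac{4}{3}(\epsilon-K)$, that $\grad f$ is a principal direction with principal curvature $-f/2$, and (after fixing $H/|H|$ as unit normal) that $f>0$. Accordingly I set
$$
f_{1,C}=\sqrt{\tfrac{4}{3}\left(1-\tilde{K}_{1,C}\right)},
$$
which is positive since $1-\tilde{K}_{1,C}>0$ by Proposition \ref{prop-1}, and which is of class $C^3$ because $\tilde{K}_{1,C}$ is built from the $C^3$ function $F$. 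I then define the symmetric tensor $A$ in the global frame $\{X_1,X_2\}$ of Proposition \ref{prop-1} by $AX_1=-\tfrac{f_{1,C}}{2}X_1$ and $AX_2=\tfrac{3f_{1,C}}{2}X_2$. Since $f_{1,C}$ depends only on $\rho$ and $X_1=\partial/\partial\rho$, $X_2=F\,\partial/\partial\theta$, the tensor $A$ is $C^3$ on all of $\mathbb{R}^2$, in particular across the gluing lines $\rho=\rho_{0,r}$.

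Next I would verify the two compatibility equations. The Gauss equation $\det A=\tilde{K}_{1,C}-1$ holds because $\left(-\tfrac{f_{1,C}}{2}\right)\left(\tfrac{3f_{1,C}}{2}\right)=-\tfrac{3}{4}f_{1,C}^2=-\left(1-\tilde{K}_{1,C}\right)$. For the Codazzi equation $(\nabla_{X_1}A)X_2=(\nabla_{X_2}A)X_1$, I would insert the Levi-Civita coefficients \eqref{eq: LV-connec}; using $X_2 f_{1,C}=X_2\tilde{K}_{1,C}=0$, the left side reduces to $\tfrac{3}{2}(X_1 f_{1,C})X_2$ and the right side to $-\tfrac{3 f_{1,C}(X_1\tilde{K}_{1,C})}{4(1-\tilde{K}_{1,C})}X_2$, and these agree by differentiating $f_{1,C}^2=\tfrac{4}{3}(1-\tilde{K}_{1,C})$. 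On the gluing lines both sides are continuous and vanish because $X_1\tilde{K}_{1,C}=0$ there, so Codazzi holds on all of $\mathbb{R}^2$ by continuity. As $\left(\mathbb{R}^2,\tilde{g}_{1,C}\right)$ is simply connected, the Fundamental Theorem of Surfaces in $\mathbb{S}^3$ yields an isometric immersion $\Phi_{1,C}$ with shape operator $A$. The relation $AX_1=-\tfrac{f_{1,C}}{2}X_1$ together with $\grad f_{1,C}\parallel X_1$ (and the trivial case $\grad f_{1,C}=0$ on the gluing lines) gives $A(\grad f_{1,C})=-\tfrac{f_{1,C}}{2}\grad f_{1,C}$, i.e. $\Phi_{1,C}$ is biconservative. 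The statement $\grad f_{1,C}\neq 0$ off the gluing lines is then immediate, since there $\grad\tilde{K}_{1,C}\neq 0$ and $f_{1,C}$ is a non-constant smooth function of $\tilde{K}_{1,C}$.

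For uniqueness, let $\tilde{\Phi}$ be any biconservative immersion of $\left(\mathbb{R}^2,\tilde{g}_{1,C}\right)$ into $\mathbb{S}^3$. Every open strip $\left(\rho_{0,r},\rho_{0,r+1}\right)\times\mathbb{R}$ is isometric to an abstract standard biconservative surface, hence satisfies the hypotheses of Theorem \ref{thm:CMC-bicons}; so on each strip $\tilde{\Phi}$ has $\grad f\neq 0$ and $f>0$, and by Theorem \ref{thm:CMOP} its shape operator is forced to act as $-f/2$ in the $X_1$-direction and $3f/2$ in the $X_2$-direction with $f=\sqrt{\tfrac{4}{3}(1-\tilde{K}_{1,C})}$, i.e. it equals the $A$ constructed above. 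By continuity the two shape operators agree across the gluing lines as well, so $\tilde{\Phi}$ and $\Phi_{1,C}$ are isometric immersions of the simply connected surface $\left(\mathbb{R}^2,\tilde{g}_{1,C}\right)$ sharing the same first and second fundamental forms. The uniqueness part of the Fundamental Theorem then gives $\tilde{\Phi}=\Psi\circ\Phi_{1,C}$ for some isometry $\Psi$ of $\mathbb{S}^3$, which is the asserted uniqueness.

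The hard part will be the behaviour exactly on the gluing lines $\rho=\rho_{0,r}$, where $\grad\tilde{K}_{1,C}$ vanishes and the local characterization of Theorem \ref{thm:char} no longer applies: one must confirm that the symmetric gluing of the standard pieces makes $A$ genuinely regular there (at least $C^1$, in fact $C^3$) and that the Gauss and Codazzi identities, established on the open strips, truly survive the passage to the limit onto the lines $\rho=\rho_{0,r}$ rather than merely holding on a dense open set. Once this is secured, the uniqueness step is comparatively soft, since the shape operator is already determined on the dense union of strips.
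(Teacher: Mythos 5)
Your existence argument coincides with the paper's own proof: your operator $A$ is exactly the paper's shape operator (note $-\tfrac{f_{1,C}}{2}=-\sqrt{1-\tilde{K}_{1,C}}/\sqrt{3}$ and $\tfrac{3f_{1,C}}{2}=\sqrt{3\left(1-\tilde{K}_{1,C}\right)}$), and the route --- verify Gauss and Codazzi via \eqref{eq: LV-connec}, invoke the Fundamental Theorem of Surfaces in $\mathbb{S}^3$ on the simply connected domain, read off biconservativity from $A\left(\grad f_{1,C}\right)=-\tfrac{f_{1,C}}{2}\grad f_{1,C}$ --- is the same, with the same reliance on the $C^3$ regularity of $F$ across the lines $\rho=\rho_{0,r}$ and on the fact that \eqref{eq: LV-connec} holds on all of $\mathbb{R}^2$ (Proposition \ref{prop-1}). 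Where you genuinely diverge is the uniqueness step. The paper restricts two biconservative immersions $\Phi_1,\Phi_2$ to the strips, applies Theorem \ref{thm:reformulate} to get that each restriction is unique up to an isometry of $\mathbb{S}^3$, and thus obtains a family $\left\{{}^sF\right\}_{s\in\mathbb{Z}}$ of isometries with $\Phi_2={}^sF\circ\Phi_1$ strip by strip; it must then still prove ${}^sF={}^{s+1}F$ for all $s$, by the matching argument of Theorem 3.10 in \cite{NO19-H}. You instead force the shape operator itself: Theorem \ref{thm:CMC-bicons} gives $\grad f\neq 0$ and $f>0$ on each strip, and then the biconservative equation $A(\grad f)=-\tfrac{f}{2}\grad f$ (which, strictly speaking, is the defining equation from the introduction rather than part of Theorem \ref{thm:CMOP} as you cite it, combined with \ref{thm:CMOP}(i) and $\trace A=f$) pins the operator down to the $A$ above; continuity extends this identification across the gluing lines, and one global application of the rigidity half of the Fundamental Theorem concludes. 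This is a real simplification: the rigidity theorem is invoked once on the connected surface $\mathbb{R}^2$, so the isometry-gluing argument imported from \cite{NO19-H} is not needed at all. The cost is the same as the paper's: everything rests on the regularity of $F$ (hence of $A$) across $\rho=\rho_{0,r}$, which you correctly single out as the point that must be checked.
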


\begin{proof}
First, we note that from Proposition \ref{prop-1} we know that the vector fields $X_1$ and $X_2$ on $\mathbb{R}^2$, previously defined, satisfy \eqref{eq: LV-connec} on $\mathbb{R}^2$.

In order to prove the existence of a biconservative immersion $\Phi:\left(\mathbb{R}^2,\tilde{g}_{1,C}\right)\to\mathbb{S}^3$, let us define an operator $A:C\left(T\mathbb{R}^2\right)\to C\left(T\mathbb{R}^2\right)$ by
$$
A\left(X_1\right)=-\frac{\sqrt{1-\tilde{K}_{1,C}}}{\sqrt{3}}X_1, \qquad A\left(X_2\right)=\sqrt{3\left(1-\tilde{K}_{1,C}\right)}X_2.
$$
We will prove that $A$ satisfies the Gauss and the Codazzi equations. Indeed, since the matrix of $A$ with respect to $\left\{X_1,X_2\right\}$ is
\begin{equation*}
A=
\left(
\begin{array}{cc}
          -\frac{\sqrt{1-\tilde{K}_{1,C}}}{\sqrt{3}} & 0 \\
          0 & \sqrt{3\left(1-\tilde{K}_{1,C}\right)}
        \end{array}
\right),
\end{equation*}
it is clear that $\det A=-1+\tilde{K}_{1,C}$, i.e., the Gauss equation is satisfied, and
$$
f_{1,C}=\trace A=\frac{2}{\sqrt{3}}\sqrt{1-\tilde{K}_{1,C}}.
$$
By some direct computations, also using \eqref{eq: LV-connec}, one obtains that
$$
\left(\nabla_{X_1}A\right)\left(X_2\right)=\left(\nabla_{X_2}A\right)\left(X_1\right),
$$
i.e., the Codazzi equation holds on $\mathbb{R}^2$.

Therefore, since $\mathbb{R}^2$ is simply connected, from the Fundamental Theorem of Surfaces in $\mathbb{S}^3$, it follows that there exists a unique, globally defined, isometric immersion $\Phi_{1,C}:\left(\mathbb{R}^2,\tilde{g}_{1,C}\right)\to\mathbb{S}^3$ such that $A$ is its shape operator. Moreover, $\Phi_{1,C}$ is biconservative as the operator $A$ satisfies
$$
A\left(\grad f_{1,C}\right)=-\frac{f_{1,C}}{2}\grad f_{1,C}.
$$
Now, we will prove the uniqueness of the biconservative immersions from $\left(\mathbb{R}^2,\tilde{g}_{1,C}\right)$ in $\mathbb{S}^3$. Let $\Phi_1$ and  $\Phi_2$ two biconservative immersions from $\left(\mathbb{R}^2,\tilde{g}_{1,C}\right)$ in $\mathbb{S}^3$. Obviously, the restrictions of these immersions to $\left(\rho_{0,r},\rho_{0,r+1}\right)\times\mathbb{R}$, with $r\geq 1$, to $\left(\rho_{0,-1},\rho_{0,1}\right)\times\mathbb{R}$, or to $\left(\rho_{0,r-1},\rho_{0,r}\right)\times\mathbb{R}$, with $r\leq -1$, are biconservative. Therefore, using Theorem \ref{thm:reformulate}, it follows that $\grad f_{1}\neq 0$ and $\grad f_{2}\neq 0$ on $\left(\left(\rho_{0,r},\rho_{0,r+1}\right)\times\mathbb{R},\tilde{g}_{1,C}\right)$, with $r\geq 1$, on $\left(\left(\rho_{0,-1},\rho_{0,1}\right)\times\mathbb{R},\tilde{g}_{1,C}\right)$, and on $\left(\left(\rho_{0,r-1},\rho_{0,r}\right)\times\mathbb{R},\tilde{g}_{1,C}\right)$, with $r\leq -1$, and these restrictions are unique (up to isometries of $\mathbb{S}^3$).

It follows that there exists a family of isometries $\left\{^sF\right\}_{s\in\mathbb{Z}}$ of $\mathbb{S}^3$ which preserve its orientation, such that
\begin{equation*}
\begin{array}{ll}
{\Phi_2}_{\left|\left(\rho_{0,s},\rho_{0,s+1}\right)\times\mathbb{R}\right.}= \ ^sF\circ {\Phi_1}_{\left|\left(\rho_{0,s},\rho_{0,s+1}\right)\times\mathbb{R}\right.}, & s\geq 1 \\\\
{\Phi_2}_{\left|\left(\rho_{0,-1},\rho_{0,1}\right)\times\mathbb{R}\right.}= \ ^0F\circ {\Phi_1}_{\left|\left(\rho_{0,-1},\rho_{0,1}\right)\times\mathbb{R}\right.}, & s=0  \\\\
{\Phi_2}_{\left|\left(\rho_{0,s-1},\rho_{0,s}\right)\times\mathbb{R}\right.}= \ ^sF\circ {\Phi_1}_{\left|\left(\rho_{0,s-1},\rho_{0,s}\right)\times\mathbb{R}\right.}, & s\leq -1.
\end{array}
\end{equation*}
Using the same argument as in Theorem 3.10 from \cite{NO19-H}, we can prove that $^sF=\ ^{s+1}F$, for any $s\in\mathbb{Z}$, i.e., we have just one isometry of $\mathbb{S}^3$, so the uniqueness of biconservative immersions from $\left(\mathbb{R}^2,\tilde{g}_{1,C}\right)$ in $\mathbb{S}^3$ is proved.
\end{proof}

\begin{remark}
A more explicit expression of the immersion $\Phi_{1,C}$ was obtained in Theorem 4.18 from \cite{N16}.
\end{remark}

\section{The uniqueness of simply connected, complete non-$CMC$ biconservative surfaces in $N^3(\epsilon)$}

In this section, we will present a complete proof of the uniqueness for the case $\epsilon=-1$. The case $\epsilon=0$ is similar and we will omit its proof. For the last case $\epsilon=1$ we will point out only the differences that appear comparing to the case $\epsilon=-1$.

\subsection{The case $\epsilon=-1$}

We recall that in \cite{NO19-H}, in order to obtain complete biconservative surfaces in $\mathbb{H}^3$, we first constructed a family of abstract complete surfaces $\left(\mathbb{R}^2,\tilde{g}_{-1,C}\right)$, where $C$ is a real constant, and then we obtained

\begin{theorem}[\cite{NO19-H}]\label{th:EUH3}
There exists a unique biconservative immersion
$$
\Phi_{-1,C}:\left(\mathbb{R}^2,\tilde{g}_{-1,C}\right)\to\mathbb{H}^3.
$$
Moreover, $\grad f_{-1,C}\neq 0$ at any point of $\mathbb{R}^\ast\times\mathbb{R}$, where $f_{-1,C}$ is the mean curvature function of the immersion $\Phi_{-1,C}$.
\end{theorem}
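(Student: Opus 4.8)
The plan is to follow the same scheme used for the case $\epsilon=1$ in Theorem \ref{th:EUS3}, which is in fact simpler here because the complete surface $\left(\mathbb{R}^2,\tilde g_{-1,C}\right)$ is obtained by gluing only \emph{two} copies of $g_{-1,C}$ along a single boundary geodesic, rather than infinitely many. First I would record the $\epsilon=-1$ analogue of Proposition \ref{prop-1}: the globally defined frame $\{X_1,X_2\}$ (with $X_1=\partial/\partial\rho$ and $X_2$ completing it to a positive orthonormal frame) satisfies $-1-\tilde K_{-1,C}>0$ everywhere, since $\tilde K_{-1,C}=-1-\xi^{8/3}/9<-1$, and the Levi-Civita connection obeys the relations of Theorem \ref{thm:carac}(iii) with $\epsilon=-1$, namely
\[
\nabla_{X_1}X_1=\nabla_{X_1}X_2=0,\quad \nabla_{X_2}X_2=-\frac{3X_1\tilde K_{-1,C}}{8\left(-1-\tilde K_{-1,C}\right)}X_1,\quad \nabla_{X_2}X_1=\frac{3X_1\tilde K_{-1,C}}{8\left(-1-\tilde K_{-1,C}\right)}X_2.
\]
As in Proposition \ref{prop-1}, these are checked first off the gluing geodesic and then extended across it by continuity, using the (at least $C^3$) regularity of the glued data there.

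For the existence part, I would define a candidate shape operator $A:C(T\mathbb{R}^2)\to C(T\mathbb{R}^2)$ by
\[
A(X_1)=-\frac{\sqrt{-1-\tilde K_{-1,C}}}{\sqrt 3}\,X_1,\qquad A(X_2)=\sqrt{3\left(-1-\tilde K_{-1,C}\right)}\,X_2,
\]
which is the direct transcription of the operator from Theorem \ref{th:EUS3}, with $1-\tilde K$ replaced by $\epsilon-\tilde K=-1-\tilde K$. A one-line computation gives $\det A=1+\tilde K_{-1,C}=\tilde K_{-1,C}-\epsilon$, i.e. the Gauss equation $K=\epsilon+\det A$ of Theorem \ref{thm:CMOP}(i), and $f_{-1,C}=\trace A=\tfrac{2}{\sqrt 3}\sqrt{-1-\tilde K_{-1,C}}>0$. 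Feeding the connection relations above into $(\nabla_{X_i}A)(X_j)$ then yields the Codazzi equation $(\nabla_{X_1}A)(X_2)=(\nabla_{X_2}A)(X_1)$. Since $\mathbb{R}^2$ is simply connected, the Fundamental Theorem of Surfaces in $\mathbb{H}^3$ produces a unique isometric immersion $\Phi_{-1,C}$ having $A$ as shape operator; because $\grad f_{-1,C}$ is proportional to $X_1$ with eigenvalue $-f_{-1,C}/2$, one checks $A(\grad f_{-1,C})=-\tfrac{f_{-1,C}}{2}\grad f_{-1,C}$, so $\Phi_{-1,C}$ is biconservative, and the same formula shows $\grad f_{-1,C}\neq 0$ exactly where $\grad\tilde K_{-1,C}\neq0$, i.e. on $\mathbb{R}^\ast\times\mathbb{R}$.

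For uniqueness, given two biconservative immersions $\Phi_1,\Phi_2$ I would restrict them to the two open half-planes $\rho>0$ and $\rho<0$, on each of which the surface satisfies the hypotheses of Theorem \ref{thm:reformulate}; hence on each half both immersions have nowhere-vanishing $\grad f$ and are unique up to an orientation-preserving isometry of $\mathbb{H}^3$. This produces isometries ${}^{+}F,{}^{-}F$ of $\mathbb{H}^3$ with $\Phi_2={}^{\pm}F\circ\Phi_1$ on the respective halves, and it remains to prove ${}^{+}F={}^{-}F$. This matching along the single gluing geodesic $\rho=0$ is the main obstacle: one must show that the two isometries agree to sufficiently high order along that curve (that they carry the common limiting tangent planes, normals and shape-operator data to one another), which is precisely the argument of Theorem 3.10 in \cite{NO19-H}; once ${}^{+}F={}^{-}F$, the two immersions differ by a single global isometry and uniqueness follows. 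Compared with the $\epsilon=1$ case, no induction over $s\in\mathbb{Z}$ and no passage to a limit of isometries is needed, since here there are only two pieces and one interface.
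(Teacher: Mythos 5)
Your proposal is correct and takes essentially the same route as the paper: Theorem \ref{th:EUH3} is quoted from \cite{NO19-H}, and the proof given there (which this paper mirrors in its proof of Theorem \ref{th:EUS3} for $\epsilon=1$) is exactly your scheme — a globally defined frame satisfying the connection relations, the diagonal candidate shape operator checked against Gauss and Codazzi, the Fundamental Theorem of Surfaces for existence and biconservativity, and, for uniqueness, restriction to the two half-planes via Theorem \ref{thm:reformulate} followed by matching the two ambient isometries along the gluing geodesic as in Theorem 3.10 of \cite{NO19-H}.
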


\begin{remark}
In the same paper \cite{NO19-H}, an explicit expression of the immersion $\Phi_{-1,C}$ was obtained.
\end{remark}

In order to prove the main result of this section we first give

\begin{proposition} \label{prop-connected-components}
Let $\Phi:M^2\to\mathbb{H}^3$ be a simply connected, complete, non-$CMC$ biconservative surface. Denote $\Omega=\left\{p\in M \ :\  (\grad f)(p)\neq 0\right\}$. We have
\begin{itemize}
  \item [(i)] the open subset $\Omega$ cannot have only one connected component;
  \item [(ii)] the subset $\Omega$ cannot have only two connected components $\Omega_1$ and $\Omega_2$ such that the intersection of their boundaries in $M$, $\partial^M \Omega_1 \cap \partial^M \Omega_2$, is the empty set;
  \item [(iii)] assume that there are two connected components of $\Omega$, $\Omega_1$ and $\Omega_2$ such that $\partial^M \Omega_1 \cap \partial^M \Omega_2=\gamma_0(I)$, where $\gamma_0:I\to M$ is a smooth curve parametrized by arc-length. Then $M$ is isometric to $\left(\mathbb{R}^2,\tilde{g}_{-1,C}\right)$.
\end{itemize}
\end{proposition}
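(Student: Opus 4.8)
The plan is to prove one structural fact — that a connected component of $\Omega$ is always a complete \emph{half} of an abstract standard biconservative surface, bounded by a single geodesic on which $\grad f$ vanishes, and that nothing but a mirror copy of such a half can be glued along that geodesic — and then to read off (i), (ii), (iii). First I would analyse one component $\Omega_i$. There $\grad f\neq 0$, so Theorem~\ref{thm:CMOP} gives $f>0$, $\epsilon-K>0$, $\grad K\neq 0$, the level curves of $K$ circles, and $K=\epsilon-\tfrac34 f^2$ (so $\grad K$ and $\grad f$ vanish together). By Theorems~\ref{thm:metric}, \ref{thm:carac} and \ref{thm:reformulate}, $\Omega_i$ carries coordinates $(\rho,\theta)$ with $g=d\rho^2+h_i^2(\rho)\,d\theta^2$, where $X_1=\partial_\rho$ is a unit geodesic field, $X_2$ is tangent to the level curves, and $K=K(\rho)$ obeys \eqref{eq:ec-K''} with $'=d/d\rho$. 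Using that $M$ is complete and simply connected, I would argue that $X_1,X_2$ have complete integral curves, forcing $\theta\in\mathbb{R}$ and $\rho$ to sweep the full range of the standard surface: one complete end $\rho\to+\infty$ (where $K\to\epsilon$) and one finite end $\rho\to\rho_0^{(i)}$, reached at finite distance, where $\grad K\to 0$. Thus $\Omega_i$ is the entire abstract standard biconservative surface (with parameter $C_i$), and its only finite boundary $\gamma_i:=\partial^M\Omega_i=\{\rho=\rho_0^{(i)}\}$ is a geodesic by Remark~\ref{rk:geodesic}.

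The heart of the matter is the jump of $f''$ at $\gamma_i$. Letting $\rho\to\rho_0^{(i)}$ in \eqref{eq:ec-K''} and using $K'(\rho_0^{(i)})=0$ gives $K''(\rho_0^{(i)})=-\tfrac83 K_0(\epsilon-K_0)$, where $K_0=K(\rho_0^{(i)})<0$ and $\epsilon-K_0>0$; hence $K''(\rho_0^{(i)})\neq 0$, and differentiating $f^2=\tfrac43(\epsilon-K)$ twice (with $f'=0$ at $\rho_0^{(i)}$) yields $f''(\rho_0^{(i)})=-\tfrac{2}{3f_0}K''(\rho_0^{(i)})\neq 0$, where $f_0=\lim_{\rho\to\rho_0^{(i)}}f>0$. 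Suppose now the open one-sided neighbourhood of $\gamma_i$ on the far side does not meet $\Omega$, i.e.\ $\grad f\equiv 0$ there. Then $f\equiv f_0$ on it, so $\partial_\rho^2 f=0$ there, while from the $\Omega_i$ side $\partial_\rho^2 f\to f''(\rho_0^{(i)})\neq 0$. Since $\Phi$ is a smooth immersion, $f=\tfrac12\trace A$ is smooth, in particular $C^2$, across the geodesic $\gamma_i$ (in Fermi coordinates about it), so the two one-sided limits of $\partial_\rho^2 f$ must agree — a contradiction. Hence no $CMC$ piece can abut a component along its finite boundary. Part (i) follows: a single component would, by completeness, have to be continued past $\gamma_1$, and that continuation, lying outside $\Omega$, would be $CMC$. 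Part (ii) follows likewise: if the two components have disjoint boundaries, the continuation past $\gamma_1$ is not $\Omega_1$ (it is beyond its only finite boundary) and is not $\Omega_2$ (otherwise $\gamma_1\subset\partial^M\Omega_2$, against $\partial^M\Omega_1\cap\partial^M\Omega_2=\emptyset$), hence is again $CMC$.

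For (iii) I would choose a single signed arc-length coordinate $\rho$ normal to $\gamma_0$, so that $\Omega_1=\{\rho>\rho_0\}$ and $\Omega_2=\{\rho<\rho_0\}$ fill the two sides, with metric $d\rho^2+h^2(\rho)\,d\theta^2$, $h=h_1$ for $\rho>\rho_0$ and $h=h_2$ for $\rho<\rho_0$; the $\theta$-coordinate passes continuously through $\gamma_0$ because $X_2$ is globally defined along this common level curve. Smoothness of $\Phi$ makes $h$ (hence $K=-h''/h$) smooth through $\rho_0$, and the geodesic condition on $\gamma_0$ gives $h'(\rho_0)=0$, i.e.\ $K'(\rho_0)=0$. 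On either side $K$ solves \eqref{eq:ec-K''}, in which $K''$ depends only on $K$ and $(K')^2$; this equation is therefore invariant under $\rho\mapsto 2\rho_0-\rho$, and since $K'(\rho_0)=0$ the reflected function solves it with the same Cauchy data at $\rho_0$. Uniqueness of solutions gives $K(2\rho_0-\rho)=K(\rho)$, so the metric is symmetric about $\gamma_0$ and $C_1=C_2=:C$ (the common value being fixed by $K|_{\gamma_0}$). Hence $\Omega_1\cup\gamma_0\cup\Omega_2$ is precisely the symmetric double constructed in Section~\ref{sec-complete}, namely $\left(\mathbb{R}^2,\tilde{g}_{-1,C}\right)$.

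Finally, $\left(\mathbb{R}^2,\tilde{g}_{-1,C}\right)$ is complete (as in Theorem~\ref{th:EUH3}), hence closed in $M$; it is also open in $M$, since near $\gamma_0$ the surface has exactly the two sides $\Omega_1,\Omega_2$. As $M$ is connected, $M=\Omega_1\cup\gamma_0\cup\Omega_2$ is isometric to $\left(\mathbb{R}^2,\tilde{g}_{-1,C}\right)$, proving (iii). I expect two main difficulties. The first is the completeness argument establishing that each component is the \emph{entire} standard surface — full $\theta\in\mathbb{R}$, $\rho$ running up to both ends, and $\gamma_i$ a single embedded geodesic reached at finite distance; this is where completeness and simple connectedness of $M$ are genuinely used. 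The second is the boundary behaviour of $f$: the evaluation $K''(\rho_0^{(i)})=-\tfrac83 K_0(\epsilon-K_0)\neq 0$ drawn from \eqref{eq:ec-K''} is what powers the ``no $CMC$ neighbour'' obstruction, and with it (i) and (ii) collapse, the reflection step in (iii) being then a short ODE-uniqueness argument.
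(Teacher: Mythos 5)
Your proposal follows the same global skeleton as the paper's proof: each connected component of $\Omega$ is identified with a full abstract standard biconservative surface whose finite boundary in $M$ is a single injective geodesic along which $\grad f$ vanishes; a $CMC$ region is then shown to be unable to abut that geodesic (this gives (i) and (ii)); and (iii) comes from gluing two such components along the common boundary. The genuine difference is the mechanism of the ``no $CMC$ neighbour'' step. The paper's argument is zeroth order and uses the biconservative PDE: approaching a boundary point $p_0$ from the locally constant-$f$ side gives $(\grad f)(p_0)=(\Delta f)(p_0)=0$, and substituting this into \eqref{eq:bicons1} (which extends to $p_0$ by continuity from $\Omega$) forces $f(p_0)=0$, contradicting the strictly positive limit of $f$ from inside $\Omega$. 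Your argument is second order and purely intrinsic: evaluating \eqref{eq:ec-K''} at the finite boundary, where $K'\to 0$, gives $K_0''=-\frac{8}{3}K_0(\epsilon-K_0)\neq 0$ (indeed $K_0<0$, $\epsilon-K_0>0$ when $\epsilon=-1$), hence $\partial_\rho^2 f\to -\frac{2}{3f_0}K_0''\neq 0$ from inside, while $\partial_\rho^2 f\equiv 0$ on the constant-$f$ side, contradicting smoothness of $f$ on $M$. Both mechanisms are sound; yours never invokes the immersion's equation \eqref{eq:bicons1}, only the intrinsic ODE for $K$, while the paper's needs only continuity of $\Delta f$ rather than of the full Hessian.

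Two caveats. First, the load-bearing structural step --- that a component $\Omega_i$ is isometric to the \emph{entire} standard surface and not a proper open subset, that its boundary is one injective geodesic, and that the far side of this geodesic really lies in $\Int(M\setminus\Omega)$ (i.e.\ no ``wrap-around'' of $\Omega_i$ onto its own boundary, which is exactly where simple connectedness is needed) --- is only sketched in your proposal; the paper spends most of its proof on precisely this, via a Cauchy-sequence/completeness argument together with Remark 3.8 and Theorem 3.10 of \cite{NO19-H}. You flag it as a difficulty, correctly, but it is not routine and your appeal to ``complete integral curves of $X_1,X_2$'' does not by itself rule out these pathologies, since those fields live only on $\Omega_i$. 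Second, in (iii) the inference ``$h'(\rho_0)=0$, i.e.\ $K'(\rho_0)=0$'' is a non sequitur, since $K'=-h'''/h+h''h'/h^2$; the correct (and immediate) justification is that $\gamma_0\subset M\setminus\Omega$, so $\grad f=0$ along $\gamma_0$, and differentiating $f^2=\frac{4}{3}(\epsilon-K)$, valid on the neighbourhood $\Omega_1\cup\gamma_0\cup\Omega_2$ of $\gamma_0$, gives $\grad K=0$ there. With these repairs, your reflection/ODE-uniqueness argument for (iii) and the final open-closed argument give a slightly more detailed version of the paper's terser ``the boundary geodesics coincide and continuity of $K$ forces $C=C'$''.
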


\begin{proof}
In order to prove (i), we assume that $\Omega$ has only one connected component, i.e., $\Omega$ is connected. Then, it follows that $\Omega$ is isometric to an open connected subset $\tilde{\Omega}$ of $\left((0,\infty)\times\mathbb{R},g_{-1,C}\right)$, for some real constant $C$.

If $\partial^M \Omega=\emptyset$, then $\Omega=M$, that means $\tilde{\Omega}$ is isometric to $M$. But this is false since $\tilde{\Omega}$ is not complete.

Assume now that $\partial^M \Omega\neq \emptyset$. First we note that $\grad f \big|_{\partial^M \Omega}=0$. It is known that there exists a unique biconservative immersion from $\left((0,\infty)\times\mathbb{R},g_{-1,C}\right)$, and then from $\left(\mathbb{R}^2,\tilde{g}_{-1,C}\right)$ in $\mathbb{H}^3$, $\Phi_{-1,C}$, which extends the composition between $\Phi \big|_{\Omega}$ and the isometry between $\Omega$ and $\tilde{\Omega}$.

Let $p\in \partial^M\Omega$. It follows that there exists a sequence in $\Omega$ that converges to $p$. This sequence is a Cauchy sequence to whom it corresponds, through the isometry from $\Omega$ to $\tilde{\Omega}$, a sequence in $\tilde{\Omega}$ which is also Cauchy. Now, as $\left(\mathbb{R}^2,\tilde{g}_{-1,C}\right)$ is complete, it follows that the Cauchy sequence in $\tilde{\Omega}$ is also convergent to a point $\tilde{p}\in \overline{\tilde{\Omega}}^{\mathbb{R}^2}\setminus\Int \tilde{\Omega}=\partial ^{\mathbb{R}^2}\tilde{\Omega}$. We note that $\tilde{p}$ is unique determined by $p$ and, moreover, $\left(\grad f\right)(p)=\left(\grad f_{-1,C}\right)\left(\tilde{p}\right)=0$. Analogously, to any point in $\partial ^{\mathbb{R}^2}\tilde{\Omega}$ it corresponds a unique point in $\partial ^{M}\Omega$. Now, since $\tilde{\Omega}$ is an open connected subset of $\left( (0,\infty)\times\mathbb{R},g_{-1,C}\right)$ and $\left(\grad f_{-1,C}\right)\big|_{\partial ^{\mathbb{R}^2}\tilde{\Omega}}=0$, using the completeness of $M$ we obtain that $\tilde{\Omega}=(0,\infty)\times\mathbb{R}$.

We also note that $\partial ^{M}\Omega$ is an injective geodesic curve corresponding to $\theta\to \left(0,\theta\right)$ (see Remark 3.8 in \cite{NO19-H}). The boundary $\partial ^{M}\Omega$ is a closed, non-compact subset of $M$ and, moreover, it is a regular curve of $M$.

Further, let us consider $p_0\in \partial^M\Omega$ and $\gamma(t)$ a geodesic curve parametrized by arc-length which is directed towards the exterior of $\Omega$ such that $\gamma(0)=p_0$ and $\gamma'(0)$ is orthogonal to $\partial^M\Omega$. We take a sequence $p_n=\gamma\left(1/n\right)$, $n\in\mathbb{N}^\ast$, that converges to $p_0$. It is clear that $\left(p_n\right)\subset \Int (M\setminus\Omega)=(M\setminus\Omega)\setminus\left(\partial^M \Omega\right)$ and therefore $(\Delta f)\left(p_n\right)=(\grad f)\left(p_n\right)=0$. Passing to the limit for $n\to\infty$, one obtains $(\Delta f)\left(p_0\right)=(\grad f)\left(p_0\right)=0$.

On the other hand, from \eqref{eq:bicons1} with $\epsilon=-1$, which holds on $\Omega$, passing to the limit we get
$$
f\left(p_0\right)(\Delta f) \left(p_0\right)+|(\grad f)\left(p_0\right)|^2-\frac{4}{3}f^2\left(p_0\right)-f^4\left(p_0\right)=0,
$$
and, therefore $f\left(p_0\right)=0$. But this is a contradiction because
$$
f\left(p_0\right)=f_{-1,C}\left(0,\theta_0\right)=\frac{2}{3\sqrt{3}}\xi_{01}^{4/3}\neq 0.
$$
Now, to prove (ii), we assume that $\Omega$ has exactly two connected components $\Omega_1$ and $\Omega_2$ such that $\partial^M \Omega_1 \cap \partial^M \Omega_2=\emptyset$. As $\Omega_1$ is maximal it follows that $\grad f\big|_{\partial^{M}\Omega_1}=0$. Using the same ideas as in the proof of (i), we get that $\Omega_1$ is isometric to $\tilde{\Omega}_1=(0,\infty)\times\mathbb{R}$ and $\partial ^{M}\Omega_1$ is an injective geodesic curve corresponding to $\theta\to \left(0,\theta\right)$.

Further, we consider $\gamma(t)$ a geodesic curve parametrized by arc-length which is directed towards the exterior of $\Omega_1$ and starts on the boundary of $\Omega_1$. Since $\partial^M \Omega_1 \cap \partial^M \Omega_2=\emptyset$, we can choose a sequence $p_n=\gamma\left(1/n\right)\in \Int (M\setminus\Omega)=(M\setminus\Omega)\setminus\left(\partial^M \Omega\right)$, for any $n\in\mathbb{N}^\ast$,  and, as in (i), we obtain a contradiction.

In the last part of the proof, we will show that if we assume that there are two connected components of $\Omega$, $\Omega_1$ and $\Omega_2$, such that $\partial^M \Omega_1 \cap \partial^M \Omega_2=\gamma_0(I)$, where $\gamma_0:I\to M$ is a curve parametrized by arc-length, then $M$ is isometric to $\left(\mathbb{R}^2,\tilde{g}_{-1,C}\right)$. First, we note that, since $\partial^M \Omega_1$ and $\partial^M \Omega_2$ are geodesics and they coincide along a curve, they coincide everywhere, i.e., $\partial^M \Omega_1=\partial^M \Omega_2$. As $\Omega_1$ is isometric to $\left((0,\infty)\times\mathbb{R},g_{-1,C}\right)$ and $\Omega_2$ is isometric to $\left((0,\infty)\times\mathbb{R},g_{-1,C'}\right)$, from the continuity of the Gaussian curvature on $\partial^M \Omega_1$ we get $C=C'$.

It follows that $M=\Omega_1\cup \partial^M\Omega_1\cup\Omega_2$ and it is isometric to $\left(\mathbb{R}^2,\tilde{g}_{-1,C}\right)$.



\end{proof}

Our result concerning the uniqueness is

\begin{theorem}\label{th:uniqueness1}
Let $\Phi:M^2\to\mathbb{H}^3$ be a simply connected, complete, non-$CMC$ biconservative surface. Then, up to isometries of the domain and codomain, $M$ and $\Phi$ are those given in Theorem \ref{th:EUH3}.
\end{theorem}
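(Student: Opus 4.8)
The plan is to show that the non-$CMC$ and completeness hypotheses, together with Proposition \ref{prop-connected-components}, force $M$ to be isometric to one of the model surfaces $\left(\mathbb{R}^2,\tilde{g}_{-1,C}\right)$, and then to conclude by the uniqueness part of Theorem \ref{th:EUH3}. First I would record that, since $\Phi$ is non-$CMC$, the open set $\Omega=\{p\in M:(\grad f)(p)\neq 0\}$ is non-empty, and by Proposition \ref{prop-connected-components}(i) it cannot be connected, so it has at least two connected components. Each component $\Omega_i$ is a maximal open connected set on which $\grad f\neq 0$, hence a non-$CMC$ biconservative surface; by Theorem \ref{thm:CMOP} it satisfies the curvature hypotheses of the local characterization (Theorems \ref{thm:char} and \ref{thm:reformulate}), and then, exactly as in the proof of Proposition \ref{prop-connected-components}(i) and using the completeness of $M$, each $\Omega_i$ is isometric to the full half-surface $\left((0,\infty)\times\mathbb{R},g_{-1,C_i}\right)$, with boundary $\partial^M\Omega_i$ a complete injective geodesic along which $\grad f=0$.

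The heart of the argument is to produce two components sharing an entire boundary geodesic, so that Proposition \ref{prop-connected-components}(iii) applies. I would fix a component $\Omega_1$ with boundary geodesic $\gamma=\partial^M\Omega_1$, pick $p_0\in\gamma$, and run a unit-speed geodesic $\gamma_{\mathrm{out}}$ issuing from $p_0$ orthogonally to $\gamma$ towards the exterior of $\Omega_1$. The crossing dichotomy is: either a sequence $t_n\searrow 0$ has $\gamma_{\mathrm{out}}(t_n)$ lying in the interior of the $CMC$ region $M\setminus\overline{\Omega}$, or a one-sided neighborhood of $p_0$ on the exterior side lands in $\Omega$. In the first case $\grad f$ and $\Delta f$ vanish along $\gamma_{\mathrm{out}}(t_n)$, and passing to the limit in the biconservative equation \eqref{eq:bicons1} with $\epsilon=-1$ (which holds on $\Omega_1$ up to the boundary by continuity) forces $f(p_0)=0$, contradicting the explicit nonzero boundary value $f(p_0)=\frac{2}{3\sqrt{3}}\xi_{01}^{4/3}$. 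Hence the exterior side of $\gamma$ belongs to a second component $\Omega_2\neq\Omega_1$, so $p_0\in\partial^M\Omega_1\cap\partial^M\Omega_2$; since $\partial^M\Omega_1$ and $\partial^M\Omega_2$ are geodesics through $p_0$ with the same tangent line there, they coincide, giving $\partial^M\Omega_1\cap\partial^M\Omega_2=\gamma(I)$.

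With this, Proposition \ref{prop-connected-components}(iii) yields $M\cong\left(\mathbb{R}^2,\tilde{g}_{-1,C}\right)$ for some $C$ (in particular $M$ has exactly these two components). Finally, transporting $\Phi$ through an isometry $\psi:M\to\left(\mathbb{R}^2,\tilde{g}_{-1,C}\right)$ gives a biconservative immersion $\Phi\circ\psi^{-1}:\left(\mathbb{R}^2,\tilde{g}_{-1,C}\right)\to\mathbb{H}^3$; the uniqueness part of Theorem \ref{th:EUH3} (of the same type as Theorem \ref{th:EUS3}) identifies it with $\Phi_{-1,C}$ up to an isometry $G$ of $\mathbb{H}^3$, so $\Phi=G\circ\Phi_{-1,C}\circ\psi$. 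Thus, up to isometries of the domain and the codomain, $(M,\Phi)$ is the model of Theorem \ref{th:EUH3}.

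I expect the main obstacle to be this crossing step, i.e.\ upgrading ``at least two components'' to ``two components glued along a full common geodesic.'' The delicate points are the rigorous limiting use of \eqref{eq:bicons1} at the boundary to exclude a genuine $CMC$ region across $\gamma$, and the passage from a single shared boundary point to a shared geodesic; both rely essentially on $\partial^M\Omega_1$ being a geodesic with $\grad f=0$ and on the positivity of $f$ there, while everything else in the proof is an assembly of the results already established.
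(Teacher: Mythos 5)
Your proposal is correct and follows essentially the same route as the paper: Proposition \ref{prop-connected-components} is used to force $M$ to be isometric to $\left(\mathbb{R}^2,\tilde{g}_{-1,C}\right)$, and the uniqueness part of Theorem \ref{th:EUH3} then identifies $\Phi$ with $\Phi_{-1,C}$ up to isometries. The only difference is one of detail: the paper compresses the passage from the Proposition to ``$\Omega$ has exactly two connected components with a common boundary geodesic'' into a single sentence, whereas you spell this step out via the crossing dichotomy --- which is precisely the argument (the limiting use of \eqref{eq:bicons1} against the nonzero boundary value of $f$, plus the geodesic boundary structure) already contained in the proof of Proposition \ref{prop-connected-components}(i)--(ii), so your write-up is a legitimate, slightly more explicit rendering of the paper's own proof.
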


\begin{proof}
Let $\Phi:M^2\to\mathbb{H}^3$ be a simply connected, complete, non-$CMC$ biconservative surface. Then, from Proposition \ref{prop-connected-components} we have that $\Omega$ has two connected components, isometric one to another, with a common boundary and $M$ is isometric to $\left(\mathbb{R}^2,\tilde{g}_{-1,C}\right)$. Then, from Theorem \ref{th:EUH3} we conclude.
\end{proof}

\subsection{The case $\epsilon=0$}

We have the following uniqueness result

\begin{theorem}\label{th:uniqueness2}
Let $\Phi:M^2\to\mathbb{R}^3$ be a simply connected, complete, non-$CMC$ biconservative surface. Then, up to isometries of the domain and codomain, $M=\left(\mathbb{R}^2,g_{0,C}=C(\cosh u)^6\left(du^2+dv^2\right)\right)$ and
$$
\Phi(u,v)=\left(\frac{\sqrt{C}}{3}\left(\cosh u\right)^3 \cos (3v), \frac{\sqrt{C}}{3}\left(\cosh u\right)^3 \sin (3v), \frac{\sqrt{C}}{2}\left(\frac{1}{2}\sinh (2u)+u\right)\right),
$$
where $C$ is a positive real constant.
\end{theorem}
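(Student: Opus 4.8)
The plan is to reproduce, almost verbatim, the argument used for the case $\epsilon=-1$, replacing $\mathbb{H}^3$ by $\mathbb{R}^3$ and specializing equation~\eqref{eq:bicons1} to $\epsilon=0$. The proof then splits into an intrinsic part and an extrinsic part. For the intrinsic part I would first establish the $\mathbb{R}^3$ analogue of Proposition~\ref{prop-connected-components}: with $\Omega=\{p\in M:(\grad f)(p)\neq0\}$, the set $\Omega$ must have exactly two connected components sharing a common boundary, and hence $M$ is isometric to $\left(\mathbb{R}^2,\tilde{g}_{0,C}\right)$ for some $C>0$. Once this identification is secured, the extrinsic part produces the immersion: $\left(\mathbb{R}^2,\tilde{g}_{0,C}\right)$ admits a unique biconservative immersion into $\mathbb{R}^3$, and a direct computation identifies it with the surface of revolution $\Phi$ in the statement. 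Indeed, computing the first fundamental form of that $\Phi$ gives $|\Phi_u|^2=|\Phi_v|^2=C(\cosh u)^6$ and $\langle\Phi_u,\Phi_v\rangle=0$, so its induced metric is exactly $C(\cosh u)^6\left(du^2+dv^2\right)$, which is complete on $\mathbb{R}^2$ since the conformal factor blows up as $u\to\pm\infty$.

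For the intrinsic dichotomy I would run the same three steps. If $\Omega$ were connected, the intrinsic characterization (Theorems~\ref{thm:carac} and \ref{thm:reformulate}) would identify it isometrically with an open connected subset $\tilde{\Omega}$ of a standard piece $\left((0,\infty)\times\mathbb{R},g_{0,C}\right)$; completeness of $M$ then forces $\tilde{\Omega}$ to be all of $(0,\infty)\times\mathbb{R}$, with boundary the injective geodesic $\theta\mapsto(0,\theta)$ along which $\grad f=0$. Choosing a geodesic leaving a boundary point $p_0$ orthogonally into the region where $f$ is constant and passing to the limit yields $(\Delta f)(p_0)=(\grad f)(p_0)=0$. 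The decisive step is to insert this into \eqref{eq:bicons1} with $\epsilon=0$, namely $f\Delta f+|\grad f|^2-f^4=0$, which at $p_0$ collapses to $f^4(p_0)=0$, hence $f(p_0)=0$; this contradicts the fact that along the gluing boundary $f$ equals the nonzero constant $\tfrac{2}{3\sqrt{3}}\,\xi_{02}^{4/3}=\tfrac{2}{3\sqrt{3}}\,C^{2}$, where $\xi_{02}=C^{3/2}$. The case of two components with disjoint boundaries is ruled out by the identical contradiction, applied to a geodesic entering the interior of $M\setminus\Omega$. Finally, two components meeting along a common curve must have coinciding (geodesic) boundaries, and continuity of the Gaussian curvature across the seam forces the two parameters to agree, giving $M=\Omega_1\cup\partial^M\Omega_1\cup\Omega_2\cong\left(\mathbb{R}^2,\tilde{g}_{0,C}\right)$.

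The step I expect to be the real obstacle is the extrinsic one, since the $\epsilon=0$ specialization of \eqref{eq:bicons1} only makes the intrinsic contradiction \emph{easier} (the term $\tfrac{4}{3}\epsilon f^2$ simply drops out). To build the immersion I would imitate the proof of Theorem~\ref{th:EUS3}: define $A$ to be diagonal in the global frame $\{X_1,X_2\}$ with eigenvalues $-\tfrac{1}{\sqrt{3}}\sqrt{-\tilde{K}_{0,C}}$ and $\sqrt{3\left(-\tilde{K}_{0,C}\right)}$, so that $\det A=\tilde{K}_{0,C}-\epsilon=\tilde{K}_{0,C}$ (the Gauss equation), $f=\trace A=\tfrac{2}{\sqrt{3}}\sqrt{-\tilde{K}_{0,C}}$, and $A(\grad f)=-\tfrac{f}{2}\grad f$; using the connection relations \eqref{eq: LV-connec} one checks the Codazzi equation $\left(\nabla_{X_1}A\right)(X_2)=\left(\nabla_{X_2}A\right)(X_1)$ on $\mathbb{R}^2$. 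The Fundamental Theorem of Surfaces in $\mathbb{R}^3$ then gives a unique isometric immersion with shape operator $A$, which is biconservative; global uniqueness is obtained from the piecewise uniqueness of Theorem~\ref{thm:reformulate} by the gluing-of-isometries argument already used for $\mathbb{S}^3$, showing that the isometries on adjacent strips coincide. Matching this unique immersion with the explicit surface of revolution, whose induced metric was computed above to be $\tilde{g}_{0,C}$, completes the proof.
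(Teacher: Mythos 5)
Your proposal is correct and follows essentially the same route as the paper: the paper's own ``proof'' is precisely the remark that one repeats the $\epsilon=-1$ argument (Proposition \ref{prop-connected-components} adapted to $\mathbb{R}^3$, with $\Omega$ having two connected components, so $M\cong\left(\mathbb{R}^2,\tilde{g}_{0,C}\right)$) and then invokes the Fundamental-Theorem-of-Surfaces existence/uniqueness statement for $\mathbb{R}^3$ together with the explicit expressions from \cite{N16}. Your fleshing-out of both halves is sound --- including the correct boundary value $f=\frac{2}{3\sqrt{3}}\xi_{02}^{4/3}=\frac{2}{3\sqrt{3}}C^{2}$, the shape operator with $\det A=\tilde{K}_{0,C}$, and the first-fundamental-form computation --- with only the cosmetic caveats that completeness of $C(\cosh u)^6\left(du^2+dv^2\right)$ follows from the conformal factor being bounded below by $C>0$ (not from its blow-up at infinity), and that identifying the explicit $\Phi$ with the unique immersion also requires checking that $\Phi$ is biconservative and that $g_{0,C}$ is isometric to $\tilde{g}_{0,C}$, which is the content of the cited result in \cite{N16}.
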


\begin{remark}
The expressions of $M$ and $\Phi$ were given in \cite{N16} and the proof of the above theorem is similar to the proof of Theorem \ref{th:uniqueness1}, $\Omega$ having two connected components.
\end{remark}

\subsection{The case $\epsilon=1$}

We have the following uniqueness result

\begin{theorem}\label{th:uniqueness3}
Let $\Phi:M^2\to\mathbb{S}^3$ be a simply connected, complete, non-$CMC$ biconservative surface. Then, up to isometries of the domain and codomain, $M$ and $\Phi$ are those given in Theorem \ref{th:EUS3}.
\end{theorem}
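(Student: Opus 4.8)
The plan is to mirror the proof of Theorem~\ref{th:uniqueness1}, replacing Proposition~\ref{prop-connected-components} by its $\mathbb{S}^3$-analogue and finishing with the uniqueness assertion of Theorem~\ref{th:EUS3}. Set $\Omega=\{p\in M:(\grad f)(p)\neq0\}$; it is open and nonempty (as $M$ is non-$CMC$), and on it Theorem~\ref{thm:CMOP} gives $1-K>0$, $\grad K\neq0$, the identity $K=-\tfrac34 f^2+1$ (hence $\grad K=-\tfrac32 f\,\grad f$ with $f>0$), and the fact that the level curves of $K$ are circles of curvature $\tfrac{3|\grad K|}{8(1-K)}$. By Theorem~\ref{thm:carac} and the coordinate reductions of Section~\ref{sec-complete}, each connected component of $\Omega$ is then isometric to an open connected subset of an abstract standard biconservative surface $D_{1,C}=\bigl((\xi_{01},\xi_{02})\times\mathbb{R},g_{1,C}\bigr)$ with $C>4/\sqrt3$. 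Transporting Cauchy sequences through these isometries and using the completeness of $M$ exactly as in part~(i) of Proposition~\ref{prop-connected-components}, together with $\grad f\big|_{\partial^M\Omega}=0$, I would upgrade this to: each component is isometric to the \emph{entire} strip $D_{1,C}$, whose two boundary curves are the geodesics corresponding to $\xi=\xi_{01}$ and $\xi=\xi_{02}$.

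The step where $\epsilon=1$ genuinely departs from $\epsilon=-1$ is the exclusion of a ``free'' geodesic boundary. If a boundary geodesic of a component were not shared with a second component, I would run a unit-speed geodesic $\gamma$ from a boundary point $p_0$ into $\Int(M\setminus\Omega)$, where $f$ is locally constant, so that $(\grad f)(\gamma(1/n))=(\Delta f)(\gamma(1/n))=0$ and hence, in the limit, $(\grad f)(p_0)=(\Delta f)(p_0)=0$. Passing to the limit in \eqref{eq:bicons1} with $\epsilon=1$ leaves $\tfrac43 f^2(p_0)-f^4(p_0)=0$, i.e.\ $f(p_0)=0$ or $f^2(p_0)=\tfrac43$. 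On the other hand $f^2=\tfrac43(1-K)=\tfrac{4}{27}\xi^{8/3}$, so $f^2(p_0)=\tfrac{4}{27}\xi_{0i}^{8/3}$ with $\xi_{0i}\in\{\xi_{01},\xi_{02}\}$; since both vanishing points are positive and, because $C>4/\sqrt3$ is strict, satisfy $\xi_{01}<3^{3/4}<\xi_{02}$, one gets $f(p_0)\neq0$ and $f^2(p_0)\neq\tfrac43$ — a contradiction. This is exactly the point where the hyperbolic argument, which only needed to rule out $f(p_0)=0$, must be strengthened. Thus across each of the two geodesic boundaries of every component there lies a further component of $\Omega$.

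It remains to assemble the global picture. Beginning with one component $\Omega_0\cong D_{1,C}$, the previous step supplies neighbours across both of its boundary geodesics; continuity of the Gaussian curvature along each shared geodesic forces the same parameter $C$ (as in part~(iii) of Proposition~\ref{prop-connected-components}, the maps $C\mapsto\xi_{01}(C)$ and $C\mapsto\xi_{02}(C)$ being injective for $C>4/\sqrt3$), and the identification is the reflection used in Section~\ref{sec-complete} to pass from $\rho_0$ to $\rho_{\pm1}$. Iterating yields a chain $(\Omega_s)_{s\in\mathbb{Z}}$ of strips glued along the geodesics $\rho=\rho_{0,r}$, whose union with these geodesics is, by construction, isometric to $\bigl(\mathbb{R}^2,\tilde g_{1,C}\bigr)$; being the image of a complete surface under an isometric embedding it is closed in $M$, and it is visibly open, so connectedness gives that it is all of $M$. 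Feeding $M\cong(\mathbb{R}^2,\tilde g_{1,C})$ into the uniqueness part of Theorem~\ref{th:EUS3} then identifies $\Phi$ with $\Phi_{1,C}$ up to an isometry of $\mathbb{S}^3$.

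The hard part, I expect, is controlling this infinite gluing rather than any single computation: one must show the chain extends to all $s\in\mathbb{Z}$ without terminating — handled by the no-free-boundary argument above — and without closing up cyclically, which is excluded by the simple connectedness of $M$, so that the assembled surface is genuinely $\bigl(\mathbb{R}^2,\tilde g_{1,C}\bigr)$ and not a cylinder. What makes $\mathbb{S}^3$ heavier than $\mathbb{H}^3$ is precisely that each strip $D_{1,C}$ now possesses two finite geodesic boundaries (both $\xi_{01},\xi_{02}>0$), in place of the hyperbolic strip's single geodesic boundary and complete end $\xi\to0$, $\rho\to\infty$, so the gluing must be iterated on both sides.
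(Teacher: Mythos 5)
Your proposal is correct and takes essentially the same route as the paper: the paper's own (terse) proof likewise mirrors Theorem \ref{th:uniqueness1}, observing that the analogue of Proposition \ref{prop-connected-components}(i) holds because $f^2_{1,C}$ on the boundary geodesics can equal neither $0$ nor $4/3$ (precisely your computation that $C>4/\sqrt{3}$ forces $\xi_{01}<3^{3/4}<\xi_{02}$), that $\Omega$ has countably many components glued pairwise along common boundary geodesics so that $M$ is isometric to $\left(\mathbb{R}^2,\tilde g_{1,C}\right)$, and then concludes by the uniqueness part of Theorem \ref{th:EUS3}. Your write-up merely makes explicit some details the paper leaves implicit, such as the injectivity of $C\mapsto\xi_{0i}(C)$, the open-and-closed argument, and the use of simple connectedness to rule out the chain of strips closing up cyclically.
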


\begin{proof}
The proof follows the same lines as the proof of Theorem \ref{th:uniqueness1}. We have a similar result as in Proposition \ref{prop-connected-components} (i), since here $f^2_{1,C}\left(\rho_{0,-1},\theta_0\right)$ and $f^2_{1,C}\left(\rho_{0,1},\theta_0\right)$  cannot be $0$ or $4/3$. But here, the set $\Omega$ has a countable number of connected components $\left\{\Omega_s\right\}_{s\in\mathbb{Z}}$, any two of them being isometric one to another. The boundary of any connected component $\Omega_s$ is made up by two distinct injective geodesic curves and the boundaries of $\Omega_s$ and $\Omega_{s+1}$ have in common one injective geodesic curve. Therefore, one obtains that $M$ is isometric to $\left(\mathbb{R}^2,\tilde{g}_{1,C}\right)$ and from Theorem \ref{th:EUS3} we conclude.
\end{proof}

\begin{remark}
As an immediate consequence of Theorems \ref{th:uniqueness1}, \ref{th:uniqueness2} and \ref{th:uniqueness3}, it follows that a simply connected, complete, non-$CMC$ surface in $N^3(\epsilon)$, with the topologic interior of the set $\{p\in M\ :\  (\grad f)(p)=0\}$ being non-empty, cannot be biconservative.
\end{remark}

As we have already mentioned in \cite{NO19-H}, from Theorems \ref{th:uniqueness1} and \ref{th:uniqueness2} we deduce that

\begin{theorem}
If $M$ is a compact biconservative surface in $N^3(\epsilon)$, $\epsilon \in\{-1,0\}$, then $M$ is $CMC$.
\end{theorem}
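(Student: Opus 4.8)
The plan is to argue by contradiction and to reduce the compact case to the already-established classification of simply connected, complete, non-CMC biconservative surfaces. So I would assume that $M$ is compact, biconservative, but \emph{not} CMC, and aim to produce a contradiction.

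First I would pass to the Riemannian universal cover $\pi:\tilde{M}\to M$, endowed with the pulled-back metric. Since $M$ is compact it is complete, hence $\tilde{M}$ is complete as well, and by construction $\tilde{M}$ is simply connected. The composition $\Phi\circ\pi:\tilde{M}\to N^3(\epsilon)$ is again an immersion, and because biconservativity is the pointwise condition $A(\grad f)=-\frac{f}{2}\grad f$, which is preserved by the local isometry $\pi$, the lift is biconservative with mean curvature function $f\circ\pi$. As $f$ is non-constant on $M$, its lift is non-constant on $\tilde{M}$, so $\tilde{M}$ is a simply connected, complete, non-CMC biconservative surface in $N^3(\epsilon)$. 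Applying Theorem \ref{th:uniqueness1} when $\epsilon=-1$ (respectively Theorem \ref{th:uniqueness2} when $\epsilon=0$), I would conclude that $\tilde{M}$ is isometric to $\left(\r^2,\tilde{g}_{\epsilon,C}\right)$ for some admissible constant $C$, where the metric has the warped-product form $h^2(\rho)\,d\theta^2+d\rho^2$.

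It then remains to show that the quotient $M=\tilde{M}/\Gamma$, with $\Gamma=\pi_1(M)$ acting by deck transformations, cannot be compact, and this is where I expect the main obstacle to lie. I would resolve it by analyzing the isometry group of $\left(\r^2,\tilde{g}_{\epsilon,C}\right)$. The key structural fact is that the Gaussian curvature $\tilde{K}_{\epsilon,C}$ depends only on $\rho$, is a non-constant function attaining its strict minimum along the central geodesic $\rho=0$ and tending to $\epsilon$ as $\rho\to\pm\infty$ (these ends corresponding to $\xi\searrow\xi_{01}=0$); in particular $\tilde{K}_{\epsilon,C}$ is strictly monotone in $|\rho|$, so $|\rho|$ is a function of $\tilde{K}_{\epsilon,C}$. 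Consequently every isometry preserves $|\rho|$ and can act on the $\rho$-coordinate only through $\rho\mapsto\pm\rho$ combined with a shift in $\theta$. Since deck transformations act freely and properly discontinuously, the orientation-reversing reflection $\rho\mapsto-\rho$ (which fixes the geodesic $\rho=0$) is excluded, so every element of $\Gamma$ leaves the unbounded $\rho$-range invariant up to sign. Thus $\Gamma$ cannot compactify the $\rho$-direction, the quotient $\tilde{M}/\Gamma$ must remain non-compact, and this contradicts the compactness of $M$. Therefore no compact non-CMC biconservative surface exists in $N^3(\epsilon)$ for $\epsilon\in\{-1,0\}$, i.e.\ every such compact surface is CMC. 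The delicate point is precisely the description of the isometry group and the freeness argument ruling out cocompact $\Gamma$; the heart of the matter is that $\tilde{K}_{\epsilon,C}$ is a non-constant function of $\rho$ alone on a surface unbounded in $\rho$.
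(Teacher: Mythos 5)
Your proposal is correct and is essentially the paper's own argument: the paper states this theorem as an immediate consequence of Theorems \ref{th:uniqueness1} and \ref{th:uniqueness2}, and your passage to the universal cover (lifting biconservativity through the local isometry, then invoking the classification of simply connected, complete, non-CMC biconservative surfaces) is precisely the intended deduction. One small simplification: the isometry-group and deck-transformation analysis can be bypassed entirely, since Gaussian curvature is preserved by isometries, so $\tilde{K}_{\epsilon,C}$ descends to a continuous function on the compact quotient $M$, where it would attain its supremum $\epsilon$ --- but on $\left(\mathbb{R}^2,\tilde{g}_{\epsilon,C}\right)$ one has $\tilde{K}_{\epsilon,C}<\epsilon$ everywhere with $\tilde{K}_{\epsilon,C}\to\epsilon$ at infinity, so the supremum is never attained, giving the contradiction directly.
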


\end{document}